\newtheorem{theorem}{Theorem}[section]
\newtheorem{lemma}[theorem]{Lemma}
\newtheorem{proposition}[theorem]{Proposition}
\newtheorem{corollary}[theorem]{Corollary}
\newtheorem{defn}[theorem]{Definition}
\newtheorem{prop}[theorem]{Proposition}
\newtheorem{thm}[theorem]{Theorem}
\theoremstyle{definition}
\newtheorem{remark}[theorem]{Remark}
\newtheorem{question}[theorem]{Question}
\newcommand\id{\mathop{\rm id}}
\newcommand\nph{\varphi}
\newcommand\vecc{\mathop{\rm vect}}
\newcommand\comm{\mathop{\rm c}}
\newcommand\qmin{\mathop{\rm qmin}}
\newcommand\qmax{\mathop{\rm qmax}}
\newcommand\qc{\mathop{\rm qc}}
\newcommand\q{\mathop{\rm q}}
\newcommand\qs{\mathop{\rm qs}}
\newcommand\coisubset{\subseteq_{\rm coi}}   %%%%% coi inclusion
\newcommand{\cl}[1]{\mathcal{#1}}
\newcommand{\bb}[1]{\mathbb{#1}}
\begin{document}

\title{Quantum chromatic numbers via operator systems}

\author[V.~I. Paulsen]{Vern I. Paulsen}
\address{Department of Mathematics, University of Houston,
Houston, Texas 77204-3476, U.S.A.}
\email{vern@math.uh.edu}

\author[I.~G. Todorov]{Ivan G. Todorov}
\address{Pure Mathematics Research Centre, Queen's University Belfast, Belfast BT7 1NN, United Kingdom}
\email{i.todorov@qub.ac.uk}

\thanks{This work supported in part by NSF (USA), EPSRC (United Kingdom) and the 
Isaac Newton Institute for Mathematical Sciences}
\keywords{operator system, tensor product, chromatic number}
\subjclass[2010]{Primary 46L07, 05C15; Secondary 47L25, 81R15}

\date{24 November 2013}

\begin{abstract}
We define several new types of quantum chromatic numbers of a graph and  
characterise them in terms of operator system tensor products.
We establish inequalities 
between these chromatic numbers and other parameters of graphs 
studied in the literature and exhibit a link 
between them and non-signalling correlation boxes.
\end{abstract}

\maketitle

\section{Introduction}\label{s_introd}

One of the most important parameters 
of a graph is its chromatic number  (see, {\it e.g.}, \cite{gr}). 
To recall its definition, let $G = (V,E)$ be a (finite, undirected) 
graph; here $V$ is the set of vertices, while $E$ is the set of edges, of $G$. 
A \emph{$d$-colouring} (where $d$ is a positive integer) is a map 
$r : V\to \{1,\dots,d\}$ such that if $(v,w)\in E$ then $r(v)\neq r(w)$. 
The \emph{chromatic number} $\chi(G)$ of $G$ is the smallest positive integer $d$ 
for which $G$ admits a $d$-colouring.

In \cite{AHKS06} and \cite{cnmsw}, the authors considered a \lq\lq graph colouring game'', where 
two players, Alice and Bob, try to convince a referee that they have a colouring of a graph $G$;
the referee inputs a pair $(v,w)$ of vertices of $G$, and each of the players 
produces an output, according to a 
previously agreed \lq\lq quantum strategy'', that is, a probability distribution 
derived from an entangled state and collections of POVM's. 
To formalise this, recall that a POVM is a collection $(E_i)_{i=1}^k$ of positive operators 
acting on a Hilbert space $H$ with $\sum_{i=1}^k E_i = I$ (here we denote as usual by $I$ 
the identity operator). When $H = \bb{C}^p$ is finite dimensional, 
we identify the operators on $H$ with elements of the algebra $M_p$ of all 
$p$ by $p$ complex matrices. 
Given POVM's $(E_{v,i})_{i=1}^c \subseteq M_p$ and 
$(F_{w,j})_{j=1}^c \subseteq M_q$, where $v,w \in V$, and 
a unit vector $\xi\in \bb{C}^p\otimes\bb{C}^q$, one associates with each pair 
$(v,w)$ of vertices of $G$ the probability distribution 
$(\langle (E_{v,i} \otimes F_{w,j}) \xi, \xi \rangle)_{i,j=1}^c$; here, 
for an input $(v,w)$ from the referee, $\langle (E_{v,i} \otimes F_{w,j}) \xi, \xi \rangle$
is the probability for Alice producing an output $i$ and Bob -- an output $j$. 
Alice and Bob can thus convince the referee that they have a $c$-colouring if the 
following conditions are satisfied:
\begin{multline}\label{qcdefn}
\forall v, \forall i \ne j, \langle (E_{v,i} \otimes F_{v,j}) \xi, \xi \rangle =0,\\
\forall (v,w) \in E, \forall i, \langle (E_{v,i} \otimes F_{w,i}) \xi, \xi \rangle =0.
\end{multline}
If this happens, we say that the graph $G$ admits a \emph{quantum $c$-colouring}; 
the smallest positive integer $c$ for which $G$ admits a quantum $c$-colouring was
called in \cite{cnmsw} the \emph{quantum chromatic number} of $G$ and denoted by $\chi_{\q}(G)$. 

It is easy to see that $\chi_{\q}(G)\leq \chi(G)$; indeed, let $d = \chi(G)$ and
choose a $d$-colouring $r : V\to \{1,\dots,d\}$ of $G$. 
For $v\in V$, let $E_{v,i} = F_{v,i} = 1$ if $i = r(v)$ and $E_{v,i} = F_{v,i} = 0$ if $i \neq r(v)$.
The families $(E_{v,i})_{i = 1}^d$ and $(F_{v,i})_{i = 1}^d$ are POVM's 
on the Hilbert space $\bb{C}$ for each $v\in V$, 
and the conditions (\ref{qcdefn}) are clearly satisfied.

In the present paper, we introduce other natural versions of the quantum chromatic number.
Our definitions are motivated by two different sets of axioms of quantum mechanics 
as well as by a probabilistic view on the graph coloring game.  
In the {\it non-relativisitc} viewpoint, measurement operators for different labs are assumed to 
act on different Hilbert spaces and joint measurements are obtained 
by taking the tensor product of these Hilbert spaces. 
But, unlike in the above definition of the quantum chromatic number, 
there is no requirement that these Hilbert spaces be finite dimensional.  
We hence consider the difference between finite and infinite dimensional Hilbert spaces. 
In the {\it relativistic} view, on the other hand, 
the measurement operators act on a common Hilbert space 
but are assumed to commute; the joint measurement operators are in this case
obtained by considering corresponding products. 
We also introduce a probabilistic view on the graph colouring game, where, 
instead of requiring the existence of quantum $c$-colourings, 
we assume that for a fixed number of $c$ colours,  
colouring strategies exist that win with probability $1- \epsilon$ for every $\epsilon > 0.$

Tsirelson \cite{tsirelson1980}, \cite{tsirelson1993} attempted to reconcile the outcomes of these two versions for POVM's. But we now know, thanks to the work \cite{jnppsw}, that equality of all matricial  outcomes for the two different versions is equivalent to Connes' Embedding Problem.

It is thus interesting to examine the behaviour of different versions of quantum chromatic numbers in order to see if the delicate combinatorics of graphs can shed any new light on these issues.

%The viewpoints outlined above lead to several versions of the quantum 
%chromatic number. 
The  viewpoint highlighted in the present paper is that of 
operator system tensor products \cite{kavruk--paulsen--todorov--tomforde2011}; 
we show how the different tensor products introduced in 
\cite{kavruk--paulsen--todorov--tomforde2011} can be used to characterise 
the chromatic numbers we introduce, as well as the classical chromatic number. 
We furthermore describe the new parameters 
geometrically, in a fashion that is a natural extension of \cite{cnmsw} and fits with Tsirelson's work on non-signalling boxes \cite{tsirelson1980}.

Throughout the paper, we will use notions and results about operator systems and their 
tensor products which can be found in \cite{kavruk--paulsen--todorov--tomforde2011}, 
\cite{kavruk2011} and \cite{kptt2010}. We refer to these papers about 
basic properties of the tensor products that will be used subsequently, 
namely, the minimal, the commuting and the maximal tensor product, 
which are denoted by $\otimes_{\min}$, $\otimes_{\rm c}$ and $\otimes_{\max}$, 
respectively. 
We recall that the minimal (resp. maximal) operator system structure on 
the algebraic tensor product $\cl S\otimes\cl T$ of two operator systems $\cl S$ and $\cl T$
is the one with the largest (resp. smallest) matricial cones. Note that, 
given unital complete order embeddings of $\cl S$ and $\cl T$ into 
$\cl B(H)$ and $\cl B(K)$, respectively, the operator system 
$\cl S\otimes_{\min}\cl T$ is obtained after embedding 
$\cl S\otimes\cl T$ into $\cl B(H\otimes K)$.
Furthemore, the operator system $\cl S\otimes_{\rm c}\cl T$ is characterised by the universal 
property that for any pair of unital completely positive maps
$\phi : \cl S\to \cl B(H)$ and $\psi : \cl T\to \cl B(H)$, with 
commuting ranges, the map 
$\phi\cdot \psi : \cl S\otimes_{\rm c}\cl T\to \cl B(H)$ given by $\phi\cdot\psi(x\otimes y) = \phi(x)\psi(y)$,
is completely positive.

The notion of a coproduct of operator systems will play an important role
hereafter; we refer the reader to \cite{fritz} and \cite{kavruk2011}. 
Finally, we recall that if $\cl S$ and $\cl T$ are operator systems with 
$\cl S$ contained, as a linear space, in $\cl T$, the notation 
$\cl S\subseteq_{\rm coi}\cl T$ means that the 
inclusion of $\cl S$ into $\cl T$ is a complete order isomorphism onto its range.

\section{Quantum chromatic numbers and their characterisations}\label{s_qcn}

We start by introducing the various chromatic numbers we are going to investigate in this paper. 
Throughout the paper, we let $n = |V|$.

\begin{defn}\label{d_3}
Let $G = (V,E)$ be a graph. 

(i) The \emph{approximate quantum chromatic number} $\chi_{\rm qa}(G)$ 
of $G$ is the smallest $c \in \bb N$ such that for each $\epsilon >0$ there exist $p,q\in \bb{N}$ and POVM's 
$(E_{v,i})_{i=1}^c \subseteq M_p$ and 
$(F_{w,j})_{j=1}^c \subseteq M_q$, where $v,w \in V$,  and a vector $\xi\in \bb{C}^p\otimes\bb{C}^q$ satisfying the conditions
\begin{multline}\label{qadefn}
\forall v, \forall i \ne j, \langle (E_{v,i} \otimes F_{v,j}) \xi, \xi \rangle < \epsilon,\\
\forall (v,w) \in E, \forall i, \langle (E_{v,i} \otimes F_{w,i}) \xi, \xi \rangle < \epsilon.
\end{multline}

(ii) The \emph{spacial quantum chromatic number} $\chi_{\qs}(G)$ of $G$
is the smallest $c\in \bb{N}$ for which there exist
Hilbert spaces $H$ and $K$, a unit vector $\xi\in H\otimes K$ and POVM's
$(E_{v,i})_{i=1}^c \subseteq \cl B(H)$ and $(F_{w,j})_{j=1}^c \subseteq \cl B(K)$,
where $v,w \in V$, satisfying conditions (\ref{qcdefn});

(iii) The \emph{approximate spacial quantum chromatic number} 
$\chi_{\rm qas}(G)$ of $G$ is the smallest $c \in \bb N$ such that for every $\epsilon >0,$ there exist
Hilbert spaces $H$ and $K$, a unit vector $\xi\in H\otimes K$ and POVM's
$(E_{v,i})_{i=1}^c \subseteq \cl B(H)$ and $(F_{w,j})_{j=1}^c \subseteq \cl B(K)$,
where $v,w \in V$, satisfying (\ref{qadefn});
%\begin{multline}\label{qasdefn}
%\forall v, \forall i \ne j, \langle (E_{v,i} \otimes F_{v,j}) \xi, \xi \rangle < \epsilon,\\
%\forall (v,w) \in E, \forall i, \langle (E_{v,i} \otimes F_{w,i}) \xi, \xi \rangle < \epsilon.
%\end{multline}

(iv) The \emph{relativistic quantum chromatic number} $\chi_{\rm qr}(G)$ of $G$
is the smallest $c\in \bb{N}$ for which there exists a Hilbert space $H,$ a unit vector $\xi \in H$ and POVM's
$(E_{v,i})_{i=1}^c \subseteq \cl B(H)$ and $(F_{w,j})_{j=1}^c \subseteq \cl B(H)$,
where $v,w \in V$, such that $E_{v,i}F_{w,j} = F_{w,j}E_{v,i}$ for all $v,w\in V$, $i,j = 1,\dots,c$, and 
\begin{multline}\label{qcreldefn}
\forall v, \forall i \ne j, \langle E_{v,i}  F_{v,j} \xi, \xi \rangle =0,\\
\forall (v,w) \in E, \forall i, \langle E_{v,i}  F_{w,i} \xi, \xi \rangle =0.
\end{multline}

(v) The \emph{approximate relativistic quantum chromatic number} $\chi_{\rm qar}(G)$ of $G$
is the smallest $c\in \bb{N}$ such that for every $\epsilon >0,$  
there exists a Hilbert space $H,$ a unit vector $\xi \in H$ and POVM's
$(E_{v,i})_{i=1}^c \subseteq \cl B(H)$ and $(F_{w,j})_{j=1}^c \subseteq \cl B(H)$,
where $v,w \in V$, such that $E_{v,i}F_{w,j} = F_{w,j}E_{v,i}$ satisfy 
\begin{multline}\label{qareldefn}
\forall v, \forall i \ne j, \langle E_{v,i}  F_{v,j} \xi, \xi \rangle < \epsilon,\\
\forall (v,w) \in E, \forall i, \langle E_{v,i}  F_{w,i} \xi, \xi \rangle < \epsilon.
\end{multline}

(vi) The classical \emph{approximate chromatic number} $\chi_{\rm a}(G)$ of $G$ 
is the smallest $c \in \bb N$ such that for every $\epsilon >0$ 
there exists a probability space $(\Omega, P)$ and random variables, 
$f_v: \Omega \to \{ 1,\dots, c \}$ and $ g_w: \Omega \to \{ 1,\dots,c \}$ that satisfy
\begin{multline}\label{qadefnappcn}
\forall v, P( f_v = g_v) > 1- \epsilon,\\
\forall (v,w) \in E,  P(f_v = g_w) < \epsilon.
\end{multline}
\end{defn}

We will now interpret these  definitions in terms of our theory of tensor products of operator systems;
in particular, group operator systems. 
Let $F(n,c) = \bb Z_c * \cdots * \bb Z_c$, where $\bb Z_c = \{0,1,\dots,c-1\}$ 
is the cyclic group with $c$ elements, 
and there are $n$ copies in the free product. 
The C*-algebra of $\bb{Z}_c$ is canonically *-isomorphic to (the abelian C*-algebra) 
$\ell^{\infty}_c = \{(\lambda_i)_{i=1}^c : \lambda_i\in \bb{C}, i = 1,\dots,c\}$. 
Let 
$\cl S(n,c) = \ell^{\infty}_c \oplus_1 \cdots \oplus_1 \ell^{\infty}_c$ 
($n$ copies) be the corresponding operator system coproduct (see, {\it e.g.}, \cite{kavruk2011}). 
By \cite{fkpt}, $\cl S(n,c)$ can be identified with the span of the group $F(n,c)$
inside the C*-algebra $C^*(F(n,c))$.
Let $e_{v,i}$ (resp. $f_{w,j}$) denote the element of 
$\cl S(n,c)$ that is $1$ in the $i$-th component of the $v$-th copy of $\ell^{\infty}_c$
(respectively, $1$ in the $j$-th component of the $w$-th copy) and $0$ elsewhere.
Then 
$$\cl S(n,c) = {\rm span}\{e_{v,i} : v\in V, 1 \le i \le c\} 
= {\rm span}\{ f_{w,j}: v\in V, 1 \le j \le c\}.$$

Given a POVM $(E_{i})_{i=1}^c$ on a Hilbert space $H$, the linear map $\phi : \ell^{\infty}_c \to \cl B(H)$
defined by $\phi(e_i) = E_{i}$, $i = 1,\dots,c$ (where $(e_i)_{i=1}^c$ is 
the canonical basis of $\ell^{\infty}_c$) is unital and completely positive; conversely, 
every unital completely positive map on $\ell^{\infty}_c$ arises in this way. 
It follow from the universal property of the coproduct that 
there exists a bijective correspondence between unital completely positive maps
$\phi : \cl S(n,c) \to \cl B(H)$ and families $(E_{v,i})_{i=1}^c \subseteq \cl B(H)$
of POVM's ($v\in V$), where, given such a family, the corresponding map $\phi$ 
is defined by $\phi( e_{v,i}) = E_{v,i}$, $1\leq i\leq c$, $v\in V$. 

Thus, various proprties for bivariate systems $( E_{v,i})_{i=1}^c$ and $(F_{w,j})_{j=1}^c$ of POVM's  
($v\in V$) can be regarded as arising from different axioms for an 
operator system on the tensor product $\cl S(n,c) \otimes \cl S(n,c).$ 
This viewpoint motivates the following definition.

\begin{defn} \label{d4} Given a functorial operator system tensor product 
$\gamma$ defined for all pairs of operator systems,  
we define the \emph{quantum $\gamma$-chromatic number,}  
$\chi_{{\rm q} \gamma}(G)$ of a graph $G$ to be the smallest $c \in \bb N$ for which there exists  
 a state 
$s: \cl S(n,c) \otimes_{\gamma} \cl S(n,c) \to \bb C$ satisfying 
\begin{multline}\label{cc}
\forall v, \forall i \ne j, s( e_{v,i} \otimes f_{v,j})  =0,\\
\forall (v,w) \in E, \forall i, s (e_{v,i} \otimes f_{w,i} ) =0.
\end{multline}
\end{defn}

\noindent {\bf Remark } 
In Definition \ref{d4}, 
the condition that $s$ be a state can be replaced with the condition that $s$ 
be a non-zero positive linear functional; 
indeed, every such functional can be scaled to be a state without 
effecting the corresponding constraints.

Of particular interest to us will be the following special cases of Definition \ref{d4}:

The \emph{maximal quantum chromatic number} $\chi_{\qmax}(G)$ of $G$, that is,
the smallest $c\in \bb{N}$ for which there exists a state 
$s: \cl S(n,c) \otimes_{\max} \cl S(n,c) \to \bb C$ satisfying conditions (\ref{cc}).

The \emph{minimal quantum chromatic number} $\chi_{\qmin}(G)$ of $G$, that is, 
the smallest $c\in \bb{N}$ for which 
there exists a state $s: \cl S(n,c) \otimes_{\min} \cl S(n,c) \to \bb C$ satisfying (\ref{cc}).

The \emph{commuting quantum chromatic number} 
$\chi_{\qc}(G)$ of $G$, that is, the smallest $c \in \bb N$ for which there exists a state
$s: \cl S(n,c) \otimes_{\rm c} \cl S(n,c) \to \bb C$ satisfying (\ref{cc}).

Before proceeding, we motivate Definition \ref{d4} by placing the classical 
chromatic number in this framework, and then the tensor viewpoint to prove that $\chi(G) = \chi_{\rm a}(G).$
 
To this end, consider the group 
$D(n,c) = \bb Z_c \oplus \cdots \oplus \bb Z_c$ ($n$ copies). 
The C*-algebra $C^*(D(n,c))$ is isomorphic, via Fourier transform, 
to 
$$\ell^{\infty}_c\otimes\cdots \otimes \ell^{\infty}_c\cong \ell^{\infty}(\Delta_{n,c}),$$ 
where $\Delta_{n,c} = \{1,\dots,c\}^n$. 
Letting $\cl S_{\min}(n,c)$ be the operator subsystem of $C^*(D(n,c))$
spanned by the canonical generators of $D(n,c)$, namely, the elements
$$(\delta_{i_1},0,\dots,0), (0,\delta_{i_2},0,\dots,0),\dots,(0,0,\dots,\delta_{i_n}),$$
for $i_k = 1,\dots,c$, $k = 1,\dots,n$ (where $\bb{Z}_c = \{\delta_i : i = 1,\dots,c\}$),
we see that, up to a complete order isomorphism, 
$$\cl S_{\min}(n,c) = {\rm span}\{e'_{v,i} : v\in V, 1 \le i \le c\},$$
where $e'_{v,i}$ is the elementary tensor 
from $\ell^{\infty}_c\otimes\cdots \otimes \ell^{\infty}_c$ having all ones except for the 
$v$-th position, where it has the $i$-th element of the canonical basis of $\ell^{\infty}_c$. 
As above, to improve transparancy, 
we denote by $f'_{w,j}$ the generators of another copy of $\cl S_{\min}(n,c)$.

\begin{proposition}\label{p_relch}
The chromatic number $\chi(G)$ of $G$ is equal to the smallest $c\in \bb{N}$ for which there 
exists a state $s : \cl S_{\min}(n,c) \otimes_{\min} \cl S_{\min}(n,c)\to \bb{C}$ such that 
\begin{multline}\label{mc}
\forall v, \forall i \ne j, s(e'_{v,i} \otimes f'_{v,j})  = 0,\\
\forall (v,w) \in E, \forall i, s(e'_{v,i} \otimes f'_{w,i} ) = 0.
\end{multline}
\end{proposition}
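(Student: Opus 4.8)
The plan is to translate the existence of a state $s$ as in the statement into a statement about probability distributions on the finite set $\Delta_{n,c}\times\Delta_{n,c}$, under which the conditions (\ref{mc}) become support constraints that describe exactly the proper $c$-colourings of $G$.

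First I would treat the implication ``existence of such a state $\Rightarrow \chi(G)\le c$''. Since $\cl S_{\min}(n,c)$ is, by construction, an operator subsystem of $C^*(D(n,c))\cong\ell^{\infty}(\Delta_{n,c})$, the injectivity of the minimal operator system tensor product gives
\[
\cl S_{\min}(n,c)\otimes_{\min}\cl S_{\min}(n,c)\subseteq_{\rm coi}\ell^{\infty}(\Delta_{n,c})\otimes_{\min}\ell^{\infty}(\Delta_{n,c})=\ell^{\infty}(\Delta_{n,c}\times\Delta_{n,c}),
\]
the last identification being the standard fact that the minimal operator system tensor product of abelian C*-algebras coincides with their (unique) C*-algebra tensor product. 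By the Krein extension theorem a state $s$ satisfying (\ref{mc}) extends to a state on $\ell^{\infty}(\Delta_{n,c}\times\Delta_{n,c})$, that is, to a probability distribution $(\mu_{a,b})_{(a,b)\in\Delta_{n,c}^2}$, and $s(e'_{v,i}\otimes f'_{w,j})$ is unchanged by this extension. Now $e'_{v,i}$ is the indicator function of $\{a\in\Delta_{n,c}:a_v=i\}$ and $f'_{w,j}$ of $\{b\in\Delta_{n,c}:b_w=j\}$, so $s(e'_{v,i}\otimes f'_{w,j})=\sum_{a_v=i,\,b_w=j}\mu_{a,b}$. The first family of equations in (\ref{mc}) forces $\mu_{a,b}=0$ whenever $a_v\neq b_v$ for some $v$, i.e. $\mu$ is supported on the diagonal, and the second family then forces $\mu_{a,a}=0$ unless $a_v\neq a_w$ for every $(v,w)\in E$. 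Since $\sum_{a,b}\mu_{a,b}=s(1\otimes 1)=1$, there is some $a^{*}$ with $\mu_{a^{*},a^{*}}>0$, and such an $a^{*}$ is a proper $c$-colouring of $G$; hence $\chi(G)\le c$.

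For the reverse inequality, given a proper $c$-colouring $r:V\to\{1,\dots,c\}$ I would take $\mu$ to be the point mass at $(r,r)\in\Delta_{n,c}\times\Delta_{n,c}$ and let $s$ be the restriction to $\cl S_{\min}(n,c)\otimes_{\min}\cl S_{\min}(n,c)$ of the corresponding evaluation state on $\ell^{\infty}(\Delta_{n,c}\times\Delta_{n,c})$; then $s(e'_{v,i}\otimes f'_{v,j})=0$ because $i\neq j$, and $s(e'_{v,i}\otimes f'_{w,i})=0$ because $r_v\neq r_w$ whenever $(v,w)\in E$. Thus the least $c$ admitting a state as in the statement is at most $\chi(G)$, and combined with the previous step the two quantities agree. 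The only point requiring genuine care is the identification of $\cl S_{\min}(n,c)\otimes_{\min}\cl S_{\min}(n,c)$ as an operator subsystem of $\ell^{\infty}(\Delta_{n,c}\times\Delta_{n,c})$, which rests on the injectivity of $\otimes_{\min}$; after that, everything is a routine dictionary between states, probability vectors, and indicator functions.
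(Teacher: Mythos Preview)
Your argument is correct and follows essentially the same route as the paper: extend the state to the ambient commutative C*-algebra $\ell^{\infty}(\Delta_{n,c}\times\Delta_{n,c})$, read off a point in $\Delta_{n,c}\times\Delta_{n,c}$ satisfying the constraints, and observe that this point encodes a proper $c$-colouring; the converse is the same point-evaluation construction in both. The only cosmetic difference is that the paper, after extending, passes to a \emph{pure} state (using that the vanishing of the positive elements $e'_{v,i}\otimes f'_{v,j}$ and $e'_{v,i}\otimes f'_{w,i}$ cuts out a face of the state space) and then identifies pure states with point evaluations, whereas you keep the full probability vector $(\mu_{a,b})$ and simply pick any point of its support; both manoeuvres are equivalent here because the underlying set is finite.
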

\begin{proof}
We identify $V$ with the set $\{1,2,\dots,n\}$ and
suppose that $s$ is a state of $\cl S_{\min}(n,c) \otimes_{\min} \cl S_{\min}(n,c)$ 
satisfying conditions (\ref{mc}). Then $s$ has an exetension to a state of 
$\ell^{\infty}(\Delta_{n,c}\times\Delta_{n,c})$ satisfying the same conditions. 
Since the elements $e'_{v,i}$ and $f'_{w,j}$ are positive in $\ell^{\infty}(\Delta_{n,c})$, 
there exists a pure state $t$ of $\ell^{\infty}(\Delta_{n,c}\times\Delta_{n,c})$ satisfying the same conditions. 
Thus, there exists a point $(i_1,\dots,i_n,j_1,\dots,j_n)\in \Delta_{n,c}\times\Delta_{n,c}$ 
such that 
$t(u) = u(i_{1},\dots,i_{n},j_1,\dots,j_n)$, $u\in \ell^{\infty}(\Delta_{n,c}\times\Delta_{n,c})$
The first set of conditions (\ref{mc}) imply that $i_v = j_v$, $v\in V$. 
Assign colour $i_v$ to vertex $v$, $v\in V$. 
Then the second set of conditions (\ref{mc}) imply that if $(v,w)\in E$ then 
$i_v\neq i_w$; we have thus obtained a $c$-colouring of $G$ and hence 
$\chi(G)\leq c$. The converse inequality follows by choosing $s$ to be the 
point evaluation at $(i_1,\dots,i_n,i_1,\dots,i_n)$,
where $i_v$ is the colour assigned to $v\in V$.
\end{proof}

\begin{thm} For any graph $G$, we have $\chi_{\rm a}(G) = \chi(G).$
\end{thm}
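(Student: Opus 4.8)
The inequality $\chi_{\rm a}(G) \le \chi(G)$ is immediate: a genuine $d$-colouring $r$ gives, for each $\epsilon > 0$, a deterministic (one-point probability space) assignment $f_v = g_v = r(v)$ that satisfies (\ref{qadefnappcn}) with the left-hand probabilities equal to $1$ and the right-hand probabilities equal to $0$. So the content is the reverse inequality $\chi(G) \le \chi_{\rm a}(G)$, and the natural route, given the preceding Proposition \ref{p_relch}, is to pass through the operator-system formulation. First I would observe that a probability space $(\Omega, P)$ with random variables $f_v, g_w$ valued in $\{1,\dots,c\}$ is exactly the data of a state on $\ell^\infty(\Delta_{n,c} \times \Delta_{n,c})$: namely, push forward $P$ along $\omega \mapsto (f_1(\omega),\dots,f_n(\omega), g_1(\omega),\dots,g_n(\omega))$, and note $P(f_v = g_w) = s(e'_{v,w}\text{-type indicator})$. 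More precisely, restricting such a state to $\cl S_{\min}(n,c) \otimes_{\min} \cl S_{\min}(n,c) \subseteq \ell^\infty(\Delta_{n,c}) \otimes_{\min} \ell^\infty(\Delta_{n,c}) = \ell^\infty(\Delta_{n,c}\times\Delta_{n,c})$, one gets a state $s$ with $s(e'_{v,i}\otimes f'_{w,j}) = P(f_v = i,\, g_w = j)$, so that $\sum_i s(e'_{v,i}\otimes f'_{v,i}) = P(f_v = g_v)$ and $\sum_i s(e'_{v,i}\otimes f'_{w,i}) = P(f_v = g_w)$.

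\textbf{The approximation/compactness step.} Fix $c = \chi_{\rm a}(G)$. For each $\epsilon = 1/m$ we obtain, as above, a state $s_m$ on $\cl S_{\min}(n,c)\otimes_{\min}\cl S_{\min}(n,c)$ with $\sum_{i\ne j} s_m(e'_{v,i}\otimes f'_{v,j}) < \epsilon$ for every $v$ (equivalently $s_m(e'_{v,i}\otimes f'_{v,j}) < \epsilon$ since the terms are nonnegative) and $s_m(e'_{v,i}\otimes f'_{w,i}) < \epsilon$ for every edge $(v,w)$ and every $i$. The state space of the finite-dimensional operator system $\cl S_{\min}(n,c)\otimes_{\min}\cl S_{\min}(n,c)$ is weak-$*$ compact, so $(s_m)$ has a convergent subnet with limit a state $s$. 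Each of the finitely many constraint functionals $x \mapsto s(x)$ evaluated at $e'_{v,i}\otimes f'_{v,j}$ (for $i\ne j$) and at $e'_{v,i}\otimes f'_{w,i}$ (for $(v,w)\in E$) is weak-$*$ continuous, hence $s$ satisfies (\ref{mc}) exactly. By Proposition \ref{p_relch}, $\chi(G) \le c = \chi_{\rm a}(G)$, completing the proof.

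\textbf{Where the care is needed.} There is no deep obstacle here; the two things to get right are (a) the identification of the randomised colouring data with states on the abelian C*-algebra $\ell^\infty(\Delta_{n,c}\times\Delta_{n,c})$ and the verification that the probabilities $P(f_v = g_w)$ match the sums $\sum_i s(e'_{v,i}\otimes f'_{w,i})$ — which relies only on the fact that $\ell^\infty_c$ is the C*-algebra of $\bb{Z}_c$ and the $e'_{v,i}$ are the indicator functions of the coordinate slices — and (b) the observation that each individual constraint in (\ref{qadefnappcn}) is a sum of finitely many nonnegative terms, so that a bound on the sum by $\epsilon$ forces each term to be $< \epsilon$, and conversely in the limit each term being $0$ forces the sum to be $0$. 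I would also remark that this argument is really the $\gamma = \min$ case of a general principle: $\chi_{{\rm q}\min}(G)$ (with the operator systems $\cl S(n,c)$, not $\cl S_{\min}(n,c)$) admits an analogous "approximate" variant that coincides with it by exactly the same weak-$*$ compactness argument, and the analogous phenomenon for $\chi_{\rm qa}$ versus $\chi_{\qmin}$ will be treated separately; here the point is simply that the classical chromatic number is robust under the $\epsilon$-relaxation.
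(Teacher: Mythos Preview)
Your proof is correct and follows essentially the same route as the paper's: convert the random colouring data into a state on $\cl S_{\min}(n,c)\otimes_{\min}\cl S_{\min}(n,c)$ (the paper does this via characteristic functions $E_{v,i},F_{w,j}$ and integration against $P$, you via the pushforward measure, which is the same thing), use weak-$*$ compactness to pass to an exact state as $\epsilon\to 0$, and invoke Proposition~\ref{p_relch}. Your explicit remark that the constraints in (\ref{qadefnappcn}) are sums of nonnegative terms, hence bound each summand individually, is a detail the paper leaves implicit.
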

\begin{proof} Clearly, we have that $\chi_{\rm a}(G) \le \chi(G).$  
Suppose that $c= \chi_{\rm a}(G),$ that $\epsilon >0,$ that $(\Omega, P)$ 
is a probability space, and that $f_v: \Omega \to \{1,\dots,c \},$ 
$g_w : \Omega \to \{ 1,\dots,c \}$ satisfy (\ref{qadefnappcn}). 

Let $L^{\infty}(\Omega, P)$ denote the abelian C*-algebra of bounded measurable 
functions on $\Omega,$ set 
$A_{v,i} = \{ \omega: f_v(\omega) = i\}$ and $B_{w,j} = \{ \omega : g_w(\omega) = j\}$ 
and let $E_{v,i}: \Omega \to \bb C$ and $F_{w,j}: \Omega \to \bb C$ 
denote the characteristic functions of these sets.
Then the linear map 
$\phi_{\epsilon}: \cl S_{\min}(n,c) \otimes_{\min} \cl S_{\min}(n,c) \to L^{\infty}(\Omega, P)$ 
given by $\phi_{\epsilon}(e'_{v,i} \otimes f'_{w,j}) = E_{v,i} F_{w,j}$
is unital and completely positive.
For $u\in \cl S_{\min}(n,c) \otimes_{\min} \cl S_{\min}(n,c)$, 
define 
$$s_{\epsilon}(u) = \int_{\Omega} \phi_{\epsilon}(u)(\omega) dP(\omega); $$
then $s_{\epsilon}$ is a state on $\cl S_{\min}(n,c) \otimes_{\min} \cl S_{\min}(n,c)$.
Taking a limit of a subsequence of states of the form $s_{\epsilon}$ as $\epsilon \to 0$, 
we obtain a state on $\cl S_{\min}(n,c) \otimes_{\min} \cl S_{\min}(n,c)$ 
satisfying the conditions (\ref{mc}).  
By Prosposition \ref{p_relch}, $\chi(G) \le  \chi_{\rm a}(G)$ and the proof is complete.
\end{proof}

\begin{lemma}\label{l_comst}
Let $\cl S$ and $\cl T$ be operator systems and let 
$s: \cl S \otimes_c \cl T \to \bb C$ be a state. Then there exists a Hilbert space $H,$ 
unital completely positive maps, $\phi: \cl S \to B(H),$ $\psi: \cl T \to B(H)$ 
with commuting ranges and a unit vector $\xi \in H,$ such that 
$s(x \otimes y) = \langle \phi(x)\psi(y) \xi, \xi \rangle.$
\end{lemma}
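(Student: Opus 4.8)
The plan is to realize the state $s$ on $\cl S \otimes_c \cl T$ by a GNS-type construction, using the universal property of the commuting tensor product stated in the introduction. The key observation is that the commuting tensor product $\cl S \otimes_c \cl T$ is designed precisely so that states on it correspond to pairs of unital completely positive maps into $\cl B(H)$ with commuting ranges, together with a vector state; so the lemma is essentially the converse direction of that universal property, and the work is in unwinding the GNS representation.

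First I would apply the GNS construction to the state $s$ on the operator system $\cl S \otimes_c \cl T$: passing to a C*-cover or simply using the Stinespring/GNS machinery for unital completely positive functionals on operator systems, I obtain a Hilbert space $H$, a unital completely positive map $\pi : \cl S \otimes_c \cl T \to \cl B(H)$ and a unit vector $\xi \in H$ (cyclic for the generated structure) with $s(z) = \langle \pi(z)\xi, \xi\rangle$ for all $z \in \cl S \otimes_c \cl T$. Concretely, one embeds $\cl S \otimes_c \cl T$ completely order isomorphically into a C*-algebra $\cl A$ (for instance $\cstaru(\cl S) \otimes_{\max} \cstaru(\cl T)$, whose restriction to $\cl S \otimes \cl T$ gives the commuting structure), extends $s$ to a state on $\cl A$ by Arveson/Krein, and takes the usual GNS representation $\rho : \cl A \to \cl B(H)$ with cyclic vector $\xi$.

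Next I would define $\phi : \cl S \to \cl B(H)$ by $\phi(x) = \pi(x \otimes 1_{\cl T})$ and $\psi : \cl T \to \cl B(H)$ by $\psi(y) = \pi(1_{\cl S} \otimes y)$. Each is unital and completely positive as a composition of the unital complete order embedding $x \mapsto x \otimes 1$ (respectively $y \mapsto 1 \otimes y$) with $\pi$. The main point to verify is that $\phi$ and $\psi$ have commuting ranges and that $\pi(x \otimes y) = \phi(x)\psi(y)$; this is where the commuting tensor product does its job. In the C*-cover picture, $x \otimes 1$ and $1 \otimes y$ commute inside $\cstaru(\cl S)\otimes_{\max}\cstaru(\cl T)$, so $\rho(x\otimes 1)$ and $\rho(1\otimes y)$ commute and $\rho((x\otimes 1)(1\otimes y)) = \rho(x\otimes y)$; restricting $\rho$ to $\cl S \otimes \cl T$ recovers $\pi$. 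Hence $s(x \otimes y) = \langle \pi(x\otimes y)\xi,\xi\rangle = \langle \phi(x)\psi(y)\xi,\xi\rangle$, as desired.

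The step I expect to require the most care is the identification of $\cl S \otimes_c \cl T$ with its image inside a suitable C*-algebra in which the two copies commute — i.e., making precise that the commuting tensor product embeds completely order isomorphically into $\cstaru(\cl S)\otimes_{\max}\cstaru(\cl T)$ (or into the relevant amalgamated/coproduct C*-algebra). This is standard from \cite{kavruk--paulsen--todorov--tomforde2011}, but it is the load-bearing input; once it is in hand, the GNS construction and the definitions of $\phi,\psi$ are routine. An alternative that avoids choosing a specific C*-cover is to invoke directly the abstract characterization of $\otimes_c$ via pairs of ucp maps with commuting ranges, applied to the cyclic representation of $s$, but either way the essential content is the same.
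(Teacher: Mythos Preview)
Your proposal is correct and follows essentially the same approach as the paper: the paper's proof simply notes that $\cl S \otimes_c \cl T \subseteq_{\rm coi} C^*_u(\cl S) \otimes_{\max} C^*_u(\cl T)$, extends $s$ to a state on this C*-algebra, and then invokes the GNS representation, which is exactly the route you outline. Your identification of the complete order embedding into $C^*_u(\cl S)\otimes_{\max} C^*_u(\cl T)$ as the load-bearing step is precisely the one input the paper uses.
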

\begin{proof} We have that $\cl S \otimes_c \cl T \subset_{\rm coi} C^*_u(\cl S) \otimes_{\max} C^*_u(\cl T).$ Thus, we may extend the state $s$ to a state, still denoted by $s$, on this C*-algebra.  
The rest follows from considering the GNS representation of the states.
\end{proof}

%\cite[Proposition 10]{jnppsw}

\begin{lemma}\label{l_kas}
Let $\cl A_i$ and $\cl B_j$ be unital C*-algebras, $i = 1,\dots,n$, $j = 1,\dots,m$, and set 
$\cl S = \cl A_1\oplus_1\cdots\oplus_1\cl A_n$, $\cl T = \cl B_1\oplus_1\cdots\oplus_1\cl B_m$,
$\cl A = \cl A_1\ast\cdots\ast\cl A_n$, and 
$\cl B = \cl B_1\ast\cdots\ast\cl B_m$.
Then $\cl S\otimes_{\comm}\cl T\subseteq_{\rm coi} \cl A\otimes_{\max}\cl B$. 
\end{lemma}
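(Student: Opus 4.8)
The plan is to reduce the statement about the $n+m$-fold coproduct/free-product situation to the two-variable case, and then to realize the two-variable case using the universal property of $\cstaru$ together with Lemma~\ref{l_comst}. First I would recall the key identifications: the operator system coproduct $\cl S = \cl A_1 \oplus_1 \cdots \oplus_1 \cl A_n$ sits completely order isomorphically inside the free product $\cl A = \cl A_1 \ast \cdots \ast \cl A_n$ (this is the content of \cite{fkpt} used earlier for $\cl S(n,c)$, applied now to arbitrary unital C*-algebras), and moreover $\cstaru(\cl S) = \cl A$ by the universal property of the free product of unital C*-algebras matched against the universal property of $\cstaru$ of a coproduct. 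Likewise $\cstaru(\cl T) = \cl B$. Granting this, one inclusion is immediate: since $\cl S \subseteq_{\rm coi} \cl A$ and $\cl T \subseteq_{\rm coi} \cl B$ and the minimal tensor product is injective, we get $\cl S \otimes_{\min} \cl T \subseteq_{\rm coi} \cl A \otimes_{\max} \cl B$ at least as vector spaces with the min structure; the work is to show the \emph{commuting} structure on $\cl S \otimes_{\comm} \cl T$ agrees with the structure it inherits as a subspace of $\cl A \otimes_{\max} \cl B$.

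For that, I would argue at the level of states (equivalently, matrix states, since everything is functorial and one can replace $\bb C$ by $M_k$ throughout). A linear functional $s$ on $\cl S \otimes \cl T$ that is positive in the subspace structure coming from $\cl A \otimes_{\max} \cl B$ extends to a state on $\cl A \otimes_{\max} \cl B$; by the defining property of $\otimes_{\max}$ for C*-algebras, such a state is of the form $x \otimes y \mapsto \langle \pi(x)\rho(y)\xi,\xi\rangle$ for commuting representations $\pi,\rho$ of $\cl A,\cl B$, hence restricts to commuting u.c.p.\ maps $\phi = \pi|_{\cl S}$, $\psi = \rho|_{\cl T}$; this shows $s$ is a state of $\cl S \otimes_{\comm} \cl T$. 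Conversely, given a state $s$ of $\cl S \otimes_{\comm} \cl T$, apply Lemma~\ref{l_comst} to obtain commuting u.c.p.\ maps $\phi:\cl S \to B(H)$, $\psi:\cl T \to B(H)$ and $\xi \in H$ with $s(x\otimes y) = \langle \phi(x)\psi(y)\xi,\xi\rangle$; by the universal property of $\cstaru$, $\phi$ and $\psi$ extend to unital $*$-homomorphisms $\tilde\phi:\cl A = \cstaru(\cl S) \to B(H)$ and $\tilde\psi:\cl B = \cstaru(\cl T) \to B(H)$. The one genuinely delicate point — and the step I expect to be the main obstacle — is that these \emph{extended} homomorphisms still have commuting ranges: a priori $\phi(\cl S)$ commutes with $\psi(\cl T)$ only on the generating operator systems, not on the C*-algebras they generate. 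Here I would invoke the fact that $\cstaru(\cl S)$ is generated as a C*-algebra by $\cl S$ (indeed by each summand $\cl A_i$, which is already a C*-algebra, so $\tilde\phi|_{\cl A_i}$ is multiplicative and its range lies in the C*-algebra generated by $\phi(\cl A_i) \subseteq \phi(\cl S)$); since each $\phi(\cl A_i)$ commutes with each $\psi(\cl B_j)$ by hypothesis on $s$, and commutants are C*-algebras, the C*-algebra generated by $\bigcup_i \phi(\cl A_i)$ — which is all of $\tilde\phi(\cl A)$ — commutes with $\tilde\psi(\cl B)$.

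Once commutativity of the extended representations is in hand, the pair $(\tilde\phi,\tilde\psi)$ induces, via the universal property of $\otimes_{\max}$ for C*-algebras, a u.c.p.\ (indeed $*$-homomorphic) map $\cl A \otimes_{\max} \cl B \to B(H)$ sending $x \otimes y \mapsto \tilde\phi(x)\tilde\psi(y)$, so that $s$ extends to the state $u \mapsto \langle (\tilde\phi \cdot \tilde\psi)(u)\xi,\xi\rangle$ on $\cl A \otimes_{\max} \cl B$; restricting back shows $s$ is positive in the subspace structure. Running this argument with $M_k$-valued states for every $k$ shows the two operator system structures on $\cl S \otimes \cl T$ coincide, i.e.\ $\cl S \otimes_{\comm} \cl T \subseteq_{\rm coi} \cl A \otimes_{\max} \cl B$, and I would close by noting this is exactly the specialization needed later for $\cl A_i = \cl B_j = \ell^\infty_c$, recovering $\cl S(n,c) \otimes_{\comm} \cl S(n,c) \subseteq_{\rm coi} \cstar(F(n,c)) \otimes_{\max} \cstar(F(n,c))$.
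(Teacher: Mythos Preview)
Your overall strategy---reduce to showing that every pair of commuting u.c.p.\ maps $\phi:\cl S\to\cl B(H)$, $\psi:\cl T\to\cl B(H)$ extends to a commuting pair on $\cl A$, $\cl B$---is exactly the paper's, but the extension step contains a real error. You claim that ``by the universal property of $\cstaru$, $\phi$ and $\psi$ extend to unital $*$-homomorphisms $\tilde\phi:\cl A=\cstaru(\cl S)\to\cl B(H)$.'' Both halves of this fail. First, $\cstaru(\cl S)\neq\cl A$: for a unital C*-algebra $\cl A_i$ viewed as an operator system one has $\cstaru(\cl A_i)\supsetneq\cl A_i$ in general (already $\cstaru(\ell^\infty_2)\cong C[0,1]$, the universal C*-algebra of a positive contraction), so $\cstaru(\cl A_1\oplus_1\cdots\oplus_1\cl A_n)=\cstaru(\cl A_1)\ast\cdots\ast\cstaru(\cl A_n)$ is strictly larger than $\cl A_1\ast\cdots\ast\cl A_n$. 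Second, and more to the point, the universal property of the free product $\cl A=\cl A_1\ast\cdots\ast\cl A_n$ produces $*$-homomorphic extensions only of $*$-homomorphisms on the factors; your $\phi_i=\phi|_{\cl A_i}$ are merely u.c.p., so there is no $*$-homomorphism $\tilde\phi:\cl A\to\cl B(H)$ extending them. With only a generic u.c.p.\ extension, your commutativity argument collapses: you need $\tilde\phi(\cl A)$ to lie in the C*-algebra generated by $\bigcup_i\phi_i(\cl A_i)$, and an arbitrary u.c.p.\ extension has no reason to satisfy this.

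The paper repairs exactly this gap by invoking Boca's theorem \cite{boca}: given u.c.p.\ maps $\phi_i:\cl A_i\to\cl B(H)$, there is a \emph{specific} u.c.p.\ extension $\tilde\phi:\cl A\to\cl B(H)$ with the explicit free-word formula $\tilde\phi(a_{i_1}\cdots a_{i_k})=\phi_{i_1}(a_{i_1})\cdots\phi_{i_k}(a_{i_k})$. That formula makes it transparent that $\tilde\phi(\cl A)$ sits inside the algebra generated by the $\phi_i(\cl A_i)$, after which your commutant argument (which is correct as stated) goes through verbatim. So the missing ingredient in your proof is precisely Boca's free-product extension theorem; once you substitute it for the appeal to $\cstaru$, the rest of your write-up is fine.
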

\begin{proof}
Let $\nph : \cl S\to \cl B(H)$ and $\psi : \cl T\to \cl B(H)$ be 
unital completely positive maps with commuting ranges. It suffices to show that there exists a  
unital completely positive map $\Gamma : \cl A\ast \cl B\to \cl B(H)$ extending the map 
$\nph\cdot \psi : \cl S\otimes_{\comm}\cl T\to \cl B(H)$ defined by 
$\nph\cdot \psi(x\otimes y) = \nph(x)\psi(y)$.

We identify $\cl S$ with the linear span of $\cl A_1,\dots,\cl A_n$ inside $\cl A$; 
similarly, we identify $\cl T$ with the linear span of $\cl B_1,\dots,\cl B_m$ inside $\cl B$. 
The map $\nph$ is determined by a family $(\nph_i)_{i=1}^n$
of unital completely positive maps (where $\nph_i$
maps $\cl A_i$ into $\cl B(H)$)
via the rule 
$\nph(a_1 + \cdots a_n) = \nph_1(a_1) + \cdots \nph_n(a_n)$, 
$a_i\in \cl A_i$, $i = 1,\dots,n$. 
By \cite{boca}, there exists a unital completely positive map 
$\tilde{\nph} : \cl A\to \cl B(H)$ given by 
$\tilde{\nph}(a_{i_1}\cdots a_{i_k}) = \nph_{i_1}(a_{i_1})\cdots \nph_{i_k}(a_{i_k})$, 
where $a_{i_l}\in \cl A_{i_l}$ for each $l$, and 
$i_1\neq i_2\neq\cdots \neq i_k$.

Similarly, $\psi$ is determined by a family $(\psi_j)_{j=1}^m$, 
where $\psi_j$ is a map from $\cl B_j$ into $\cl B(H)$; moreover, 
$\nph_i(\cl A_i)$ commutes with $\psi_j(\cl B_j)$ for every pair $i,j$ of
indices. Let $\tilde{\psi} : \cl B\to \cl B(H)$ be the unital completely positive map 
given by 
$\tilde{\psi}(b_{j_1}\cdots b_{k_k}) = \psi_{j_1}(b_{j_1})\cdots \psi_{j_k}(b_{j_k})$, 
where $b_{j_l}\in \cl B_{j_l}$ for each $l$, and 
$j_1\neq j_2\neq\cdots \neq j_k$. Since the linear span of 
free words are dense in the corresponding free products $\cl A$ and $\cl B$, we 
have that $\tilde{\nph}(\cl A)$ and $\tilde{\psi}(\cl B)$ commute. 
It is now clear that the unital completely positive 
map $\Gamma : \cl A\ast\cl B\to \cl B(H)$
arising from $\tilde{\nph}$ and $\tilde{\psi}$ in a similar fashion \cite{boca} extends $\nph\cdot\psi$.
\end{proof}

\begin{theorem} 
Let $G$ be a graph. Then $\chi_{\qc}(G) = \chi_{\rm qar}(G) = \chi_{\rm qr}(G).$

Moreover, $\chi_{\qc}(G)\leq c$ if and only if there exist a Hilbert space $H$,
a unit vector $\xi\in H$ and PVM's
$(E_{v,i})_{i=1}^c$ and $(F_{w,j})_{j=1}^c$ on $H$, $v,w\in V$, such that
conditions (\ref{qcreldefn}) hold. 
%\begin{multline}\label{eq_minep}
%\forall v, \forall i \ne j, \langle (E_{v,i}F_{v,j}) \xi, \xi \rangle =0,\\
%\forall (v,w) \in E, \forall i, \langle (E_{v,i}F_{w,i}) \xi, \xi \rangle  = 0.
%\end{multline}
\end{theorem}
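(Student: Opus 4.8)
The statement asserts three equalities among chromatic numbers, $\chi_{\qc}(G) = \chi_{\rm qar}(G) = \chi_{\rm qr}(G)$, together with a PVM reformulation of $\chi_{\qc}$. I would organize the proof around a cycle of inequalities, using Lemma~\ref{l_comst} as the bridge between states on the commuting tensor product and concrete Hilbert space data with commuting ranges. The plan is to show $\chi_{\rm qr}(G)\geq \chi_{\rm qar}(G)\geq \chi_{\qc}(G)\geq \chi_{\rm qr}(G)$, and then observe that the PVM version falls out of the dilation step.

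First I would handle $\chi_{\rm qr}(G)\geq \chi_{\rm qar}(G)$, which is immediate: any exact relativistic colouring (with value $0$) trivially satisfies the approximate condition $<\epsilon$ for every $\epsilon>0$. Next, $\chi_{\rm qar}(G)\geq\chi_{\qc}(G)$: given, for $c = \chi_{\rm qar}(G)$ and each $\epsilon>0$, a Hilbert space $H$, unit vector $\xi$, and commuting POVM's $(E_{v,i}),(F_{w,j})$ satisfying~(\ref{qareldefn}), the maps $\phi_\epsilon: \cl S(n,c)\to\cl B(H)$ and $\psi_\epsilon:\cl S(n,c)\to\cl B(H)$ determined by $e_{v,i}\mapsto E_{v,i}$, $f_{w,j}\mapsto F_{w,j}$ are unital completely positive with commuting ranges, so by the universal property of $\otimes_{\rm c}$ they induce a state $s_\epsilon(x\otimes y) = \langle \phi_\epsilon(x)\psi_\epsilon(y)\xi,\xi\rangle$ on $\cl S(n,c)\otimes_{\rm c}\cl S(n,c)$ whose defining values are $<\epsilon$ in absolute value. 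Taking a weak-$*$ cluster point of $\{s_\epsilon\}$ as $\epsilon\to 0$ (the state space is weak-$*$ compact and $\cl S(n,c)\otimes_{\rm c}\cl S(n,c)$ is finite-dimensional) yields a state satisfying~(\ref{cc}) exactly, so $\chi_{\qc}(G)\leq c$.

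For the closing inequality $\chi_{\qc}(G)\geq\chi_{\rm qr}(G)$, let $c = \chi_{\qc}(G)$ and take a state $s$ on $\cl S(n,c)\otimes_{\rm c}\cl S(n,c)$ satisfying~(\ref{cc}). Apply Lemma~\ref{l_comst} to obtain a Hilbert space $H$, unital completely positive maps $\phi,\psi:\cl S(n,c)\to\cl B(H)$ with commuting ranges, and a unit vector $\xi$ with $s(x\otimes y) = \langle\phi(x)\psi(y)\xi,\xi\rangle$. Set $E_{v,i} = \phi(e_{v,i})$ and $F_{w,j} = \psi(f_{w,j})$; these are POVM's on $H$ (by the correspondence between ucp maps on $\cl S(n,c)$ and families of POVM's recalled before Definition~\ref{d4}) with $E_{v,i}F_{w,j} = F_{w,j}E_{v,i}$, and~(\ref{cc}) translates verbatim into~(\ref{qcreldefn}). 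Hence $\chi_{\rm qr}(G)\leq c$, closing the cycle and proving all three quantities equal. For the PVM refinement: starting again from Lemma~\ref{l_comst}, dilate $\phi$ and $\psi$ to $*$-homomorphisms on the universal C*-algebras $C^*_u(\cl S(n,c))$; more directly, using $\cl S(n,c)\otimes_{\rm c}\cl S(n,c)\subseteq_{\rm coi} C^*(F(n,c))\otimes_{\max}C^*(F(n,c))$ one extends $s$ to a state on the C*-algebra and passes to its GNS representation, where the canonical generators of each free product of copies of $C^*(\bb Z_c)$ act as genuine projection-valued measures (the generators of $\bb Z_c$ map to a PVM, and this property is preserved in any $*$-representation). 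This gives commuting PVM's on a common $H$ with a cyclic vector satisfying~(\ref{qcreldefn}), and conversely any such PVM data is in particular POVM data, so the PVM characterisation of $\chi_{\qc}(G)\leq c$ holds.

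\textbf{Main obstacle.} The delicate point is not any single inequality but making sure the passage to PVM's is clean: one must invoke that $\cl S(n,c)$ sits completely order isomorphically inside $C^*(F(n,c))$ (cited from \cite{fkpt}), so that $C^*_u(\cl S(n,c)) = C^*(F(n,c))$, and that the coi-inclusion $\cl S\otimes_{\rm c}\cl T\subseteq_{\rm coi}C^*_u(\cl S)\otimes_{\max}C^*_u(\cl T)$ used in Lemma~\ref{l_comst} is exactly what lets the state extend to the free-product C*-algebra where the generators are automatically projections. The compactness/cluster-point argument for $\chi_{\rm qar}\geq\chi_{\qc}$ is routine but depends essentially on finite-dimensionality of the operator system tensor product, which holds here since $n$ and $c$ are fixed integers.
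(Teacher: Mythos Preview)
Your proposal is correct and follows essentially the same route as the paper: the same cycle of inequalities $\chi_{\rm qar}\le\chi_{\rm qr}$, then $\chi_{\qc}\le\chi_{\rm qar}$ via states $s_\epsilon$ and a weak-$*$ limit, then $\chi_{\rm qr}\le\chi_{\qc}$ via Lemma~\ref{l_comst}; and for the PVM refinement the paper likewise extends $s$ to $C^*(F(n,c))\otimes_{\max}C^*(F(n,c))$ and applies GNS. One small point: the paper obtains the coi inclusion $\cl S(n,c)\otimes_{\rm c}\cl S(n,c)\subseteq_{\rm coi} C^*(F(n,c))\otimes_{\max}C^*(F(n,c))$ by invoking Lemma~\ref{l_kas} (proved via Boca's free-product extension theorem), rather than through the identification $C^*_u(\cl S(n,c))=C^*(F(n,c))$ you sketch; the latter does not follow from the coi inclusion $\cl S(n,c)\subseteq_{\rm coi}C^*(F(n,c))$ alone, so if you want to use that route you should cite the stronger result directly.
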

\begin{proof}  
Clearly, $\chi_{\rm qar}(G) \le \chi_{\rm qr}(G).$ 
If $\chi_{\rm qar}(G) =c$ then for every $\epsilon >0$ there is a system 
of POVM's on a Hilbert space and vectors satisfying (\ref{qareldefn}). 
But each such system defines a state $s_{\epsilon}: \cl S(n,c) \otimes_{\rm c} \cl S(n,c) \to \bb C$ by setting
\[ s_{\epsilon}(e_{v,i} \otimes f_{w,j}) = \langle E_{v,i}F_{w,j} \xi, \xi \rangle.\]

By taking a limit point of these states as $\epsilon \to 0,$ we obtain a state on $\cl S(n,c) \otimes_c \cl S(n,c)$ satisfying (\ref{cc}). This proves that $\chi_{\rm qc}(G) \le c.$

Finally, if $c= \chi_{\rm qc}(G),$ then there exists a state 
$s: \cl S(n,c) \otimes_{\rm c} \cl S(n,c) \to \bb C$ satisfying (\ref{cc}). 
By Lemma \ref{l_comst}, there exists a Hilbert space $H,$ a pair of unital, 
completely positive maps $\phi$ and $\psi$ with commuting ranges and a unit vector $\xi$ such that
\[s(e_{v,i} \otimes f_{w,j}) = \langle \phi(e_{v,i}) \psi(f_{w,j}) \xi, \xi \rangle .\]
However, setting $E_{v,i} = \phi(e_{v,i})$ and $F_{w,j} = \psi(f_{w,j})$ yields a set of POVM's and a vector satisfying (\ref{qcreldefn}). It follows that $\chi_{\rm qr}(G) \le c$. 

For the last statement, note that, by Lemma \ref{l_kas}, every state $s$ of $\cl S(n,c) \otimes_{\rm c} \cl S(n,c)$
extends to a state $t$ of $C^*(F(n,c)) \otimes_{\max} C^*(F(n,c))$. 
The GNS representation of $t$ yields *-representations $\pi$ and $\rho$ of $C^*(F(n,c))$
on a Hilbert space $H$, with commuting ranges, 
and a vector $\xi\in H$ such that 
$t(a\otimes b) = \langle \pi(a)\rho(b)\xi,\xi\rangle$, $a,b\in C^*(F(n,c))$. Since $e_{v,i}$
and $f_{w,j}$ are projections, we have that $(\pi(e_{v,i}))_{i=1}^c$, 
$(\rho(f_{w,j}))_{j=1}^c$ are families of PVM's with the desired properties. 
\end{proof}

We now wish to turn our attention to the various spacial versions.

\begin{theorem}\label{th_qmin}
Let $G = (V,E)$ be a graph on $n$ vertices. 
The following are equivalent:

(i) \ \ $\chi_{\qmin}(G) \le c$;

(ii) \ there exists a state 
$s: C^*(F(n,c)) \otimes_{\min} C^*(F(n,c)) \to \bb C$ satisfying conditions (\ref{cc});

(iii) for every $\epsilon > 0$, there exist Hilbert spaces $H$ and $K$,
a unit vector $\xi\in H\otimes K$ and POVM's
$(E_{v,i})_{i=1}^c \subseteq \cl B(H)$ and $(F_{w,j})_{j=1}^c \subseteq \cl B(K)$,
where $v,w \in V$, such that
\begin{multline}\label{eq_minep1}
\forall v, \forall i \ne j, \langle (E_{v,i}\otimes F_{v,j}) \xi, \xi \rangle < \epsilon,\\
\forall (v,w) \in E, \forall i, \langle (E_{v,i} \otimes F_{w,i}) \xi, \xi \rangle  < \epsilon.
\end{multline}

(iv) 
for every $\epsilon > 0$, there exist $p,q\in \bb{N}$, 
a unit vector $\xi\in \bb{C}^p\otimes \bb{C}^q$ and POVM's
$(E_{v,i})_{i=1}^c \subseteq M_p$ and $(F_{w,j})_{j=1}^c \subseteq M_q$,
where $v,w \in V$, such that (\ref{eq_minep1}) hold.
%\begin{multline}\label{eq_minfin}
%\forall v, \forall i \ne j, \langle (E_{v,i}\otimes F_{v,j}) \xi, \xi \rangle < \epsilon,\\
%\forall (v,w) \in E, \forall i, \langle (E_{v,i} \otimes F_{w,i}) \xi, \xi \rangle  < \epsilon. 
%\end{multline}

Consequently, $\chi_{\rm qmin}(G) = \chi_{\rm qas}(G) = \chi_{\rm qa}(G).$
\end{theorem}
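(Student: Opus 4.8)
The plan is to prove the chain of equivalences (i)$\Leftrightarrow$(ii)$\Leftrightarrow$(iii)$\Leftrightarrow$(iv) and then read off the equalities $\chi_{\qmin}(G) = \chi_{\rm qas}(G) = \chi_{\rm qa}(G)$ from the definitions. I would begin with (i)$\Leftrightarrow$(ii): by \cite{fkpt}, $\cl S(n,c)$ sits completely order isomorphically inside $C^*(F(n,c))$, and since the minimal tensor product is injective (functorial and spatial), $\cl S(n,c)\otimes_{\min}\cl S(n,c)\subseteq_{\rm coi} C^*(F(n,c))\otimes_{\min} C^*(F(n,c))$. Hence any state satisfying (\ref{cc}) on the small system extends to a state on the big one (using that the generators $e_{v,i}, f_{w,j}$ are positive, so vanishing of $s$ on them is preserved by positivity under extension via Hahn--Banach/Arveson), and conversely restriction of a state from the big system to the small one preserves (\ref{cc}).

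Next, for (ii)$\Leftrightarrow$(iii), I would use that $C^*(F(n,c))\otimes_{\min} C^*(F(n,c))$ is a C*-algebra, so a state on it is a weak* limit of states of the form $\omega \mapsto \langle \pi(\omega)(\eta), \eta\rangle$ where $\pi$ is a finite-dimensional representation (this is where the structure of the minimal tensor product as a concrete C*-algebra and the fact that finite-dimensional representations of a C*-algebra tensored with the trace-class give a weak*-dense set of states — or, more cleanly, the fact that $\min$ corresponds to spatial realisation on $H\otimes K$ — is used). Concretely, a state on $C^*(F(n,c))\otimes_{\min}C^*(F(n,c))$ is realised on $\cl B(H\otimes K)$ via a representation $\pi\otimes\rho$ and a unit vector $\xi$; setting $E_{v,i}=\pi(e_{v,i})$, $F_{w,j}=\rho(f_{w,j})$ gives POVM's (in fact PVM's) with $\langle (E_{v,i}\otimes F_{w,j})\xi,\xi\rangle = s(e_{v,i}\otimes f_{w,j})$, so (\ref{cc}) gives (\ref{eq_minep1}) with $\epsilon=0$, which certainly implies (iii). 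For the converse, given for each $\epsilon>0$ POVM's and a vector satisfying (\ref{eq_minep1}), define $s_\epsilon$ on $\cl S(n,c)\otimes_{\min}\cl S(n,c)$ (equivalently on the big minimal tensor product, since the POVM's live on $H\otimes K$) by $s_\epsilon(e_{v,i}\otimes f_{w,j}) = \langle (E_{v,i}\otimes F_{w,j})\xi,\xi\rangle$, check it is a state, and pass to a weak* limit point as $\epsilon\to 0$ to get a state satisfying (\ref{cc}); the state space being weak* compact makes this work.

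Then (iii)$\Leftrightarrow$(iv) is the finite- versus infinite-dimensional reduction: (iv)$\Rightarrow$(iii) is trivial, and for (iii)$\Rightarrow$(iv) I would approximate: given POVM's on possibly infinite-dimensional $H,K$ satisfying (\ref{eq_minep1}) for a given $\epsilon$, compress to large finite-dimensional subspaces containing (approximately) $\xi$ and the relevant vectors $E_{v,i}$-images etc., using that only finitely many inequalities (finitely many $v,w,i,j$) must be controlled, so a single finite-rank projection $P$ on $H$ and $Q$ on $K$ suffices to make $\langle (PE_{v,i}P\otimes QF_{w,j}Q)(P\otimes Q)\xi, (P\otimes Q)\xi\rangle$ close to the original quantities; then renormalise $PE_{v,i}P$ to genuine POVM's on $\bb C^p$ (e.g. complete $\sum_i PE_{v,i}P$ to the identity by adding a correction to one term, or rather rescale — one standard trick is $\tilde E_{v,i} = P E_{v,i} P + \frac{1}{c}(I_P - \sum_k PE_{v,k}P)$, which stays positive for $P$ large and perturbs the quantities by a controllably small amount), similarly for $F$, at the cost of replacing $\epsilon$ by $2\epsilon$, say, which is harmless since $\epsilon$ is arbitrary. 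I expect this finite-dimensional truncation — keeping positivity of the truncated-and-corrected operators while controlling all the (finitely many) bilinear expressions — to be the main technical obstacle, though it is a routine compactness/approximation argument. Finally, comparing (iii) with Definition \ref{d_3}(iii) gives $\chi_{\qmin}(G)=\chi_{\rm qas}(G)$, and comparing (iv) with Definition \ref{d_3}(i) gives $\chi_{\qmin}(G)=\chi_{\rm qa}(G)$, completing the proof.
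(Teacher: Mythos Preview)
Your overall chain of implications matches the paper's, and (i)$\Leftrightarrow$(ii) as well as the limit argument (iii)/(iv)$\Rightarrow$(ii) are essentially the paper's proofs. However, there is a genuine gap in your (ii)$\Rightarrow$(iii), and an unnecessary complication in your (iii)$\Rightarrow$(iv).

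For (ii)$\Rightarrow$(iii) you assert that a state $s$ on $C^*(F(n,c))\otimes_{\min}C^*(F(n,c))$ ``is realised on $\cl B(H\otimes K)$ via a representation $\pi\otimes\rho$ and a unit vector $\xi$'' with $s(e_{v,i}\otimes f_{w,j}) = \langle (E_{v,i}\otimes F_{w,j})\xi,\xi\rangle$, giving (\ref{eq_minep1}) with $\epsilon = 0$. This is not justified: the GNS representation of $s$ need not factor as a tensor product of representations of the two copies, and in a fixed faithful tensor-product representation $\pi\otimes\pi$ the state $s$ is in general \emph{not} a vector state. Indeed, if your claim were correct it would yield $\chi_{\qs}(G)\le \chi_{\qmin}(G)$ (with PVM's, even), which is precisely one of the open questions discussed at the end of the paper. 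The paper's argument fixes this: in a faithful representation $\pi\otimes\pi$ one invokes \cite[Corollary~4.3.10]{krII} to approximate $s$ in the weak* topology by a finite convex combination $\sum_{l=1}^r \lambda_l \langle(\cdot)\xi_l,\xi_l\rangle$ of vector states, and then converts this convex combination into a single vector state by amplifying the second leg to $H\otimes\bb{C}^r$ and taking $\xi = \sum_l \sqrt{\lambda_l}\,\xi_l\otimes\delta_l$. Your sentence about weak* limits gestures in the right direction, but you neither carry it out nor address the convex combination.

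For (iii)$\Rightarrow$(iv) you worry about renormalising the compressions $PE_{v,i}P$ to POVM's. This is unnecessary: since $\sum_i E_{v,i} = I_H$, one has $\sum_i PE_{v,i}P = PI_HP = P$, which \emph{is} the identity on $PH$, so $(PE_{v,i}P)_{i=1}^c$ is already a POVM on $PH$. The paper simply compresses by an increasing net of finite-rank projections $P_m\otimes Q_m$ and observes this directly; no correction terms are needed. (One does need to normalise the vector $(P_m\otimes Q_m)\xi$, which is routine.)
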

\begin{proof}
(i)$\Leftrightarrow$(ii) follows from Krein's Theorem and the fact that 
$\cl S(n,c)\otimes_{\min}\cl S(n,c)\coisubset C^*(F(n,c)) \otimes_{\min} C^*(F(n,c))$.

(ii)$\Rightarrow$(iii) Let $\pi : C^*(F(n,c))\to \cl B(H)$ be a faithful representation of $C^*(F(n,c))$, where
$H$ is a Hilbert space. Then 
$\pi\otimes\pi : C^*(F(n,c)) \otimes_{\min} C^*(F(n,c))\to \cl B(H\otimes H)$ is a faithful 
representation. Let $E_{v,\alpha} = \pi(e_{v,\alpha})$ and $F_{w,\beta}' = \pi(e_{w,\beta})$. 
Let $s$ be a state satisfying (ii) and $\epsilon > 0$. 
By \cite[Corollary 4.3.10]{krII}, 
$s$ belongs to the weak* closure of the convex hull of the set of all vector states of 
$(\pi\otimes\pi)(C^*(F(n,c)) \otimes_{\min} C^*(F(n,c)))$. Thus, 
there exist vectors $\xi_1,\dots,\xi_r\in  H\otimes H$ and positive scalars $\lambda_1,\dots,\lambda_r$
with $\sum_{l=1}^r \lambda_l = 1$
such that 
\begin{equation}\label{eq_app}
\left|s(e_{v,i}\otimes f_{w,j}) - \sum_{l=1}^r \lambda_l \langle (E_{v,i}\otimes F_{w,j}') \xi_l, \xi_l \rangle\right| < \epsilon,
\end{equation}
for all $v,w\in V$ and all $i,j = 1,\dots,c$.
Let $K = H\otimes\bb{C}^r$ and $\xi = \sum_{l=1}^r (\sqrt{\lambda_l}\xi_l)\otimes \delta_l$, 
where $(\delta_l)_{l=1}^r$ is the canonical basis of $\bb{C}^r$. 
Then $\|\xi\| = 1$. Let $F_{w,j} = F_{w,j}'\otimes 1_r$, $w\in V$, $j = 1,\dots,c$. 
Then $(F_{w,j})_{j = 1}^c$ is a POVM on the Hilbert space $K$ for each $w\in V$, and 
(\ref{eq_app}) can be written as 
$$\left|s(e_{v,i}\otimes f_{w,j}) - \langle (E_{v,i}\otimes F_{w,j}) \xi, \xi \rangle\right| < \epsilon, \ \ v,w\in V, i,j = 1,\dots,c.$$
Conditions (\ref{cc}) now imply that relations (\ref{eq_minep1}) are satisfied. 

(iii)$\Rightarrow$(iv) 
For a given $\epsilon > 0$, let $H$ and $K$ be Hilbert spaces, 
$\xi\in H\otimes K$ be a unit vector and 
$(E_{v,i})_{i=1}^c \subseteq \cl B(H)$ and $(F_{w,j})_{j=1}^c \subseteq \cl B(K)$ be POVM's
such that (\ref{eq_minep1}) are satisfied for $\epsilon /2$ in the place of $\epsilon$. 
We may assume that $H$ and $K$ are separable since the vector $\xi$ 
has at most countably many non-zero coefficients with respect to a given orthonormal basis.
Let $(P_m)_{m\in \bb{N}}$ (resp. $(Q_m)_{m\in \bb{N}}$) be an increasing sequence of 
projections of finite rank on $H$ (resp. $K$), strongly convergent to the identity operator. 
Let $E_{v,i}^m = P_mE_{v,i}P_m$ and $F_{w,j}^m = Q_mF_{w,j}Q_m$; 
then the families $(E_{v,i}^m)_{i=1}^c$ and $(F_{w,j}^m)_{j=1}^c$
are POVM's on the Hilbert spaces $P_mH$ and $Q_mK$, respectively.
Since $(P_m\otimes Q_m)\xi\to_{m\to\infty} \xi$, 
there exists $k$ such that (\ref{eq_minep1}) are satisfied for 
$(E_{v,i}^k)_{i=1}^c$ and $(F_{w,j}^k)_{j=1}^c$.

(iv)$\Rightarrow$(ii) Let $\epsilon > 0$. A choice of POVM's as in (iv) gives rise, as in the proof of 
Lemma \ref{l_kas}, to unital completely positive maps 
$\pi$ and $\rho$ of $C^*(F(n,c))$ on $\bb{C}^p$ and $\bb{C}^q$, 
respectively. Let $\pi\otimes_{\min}\rho$ be the corresponding unital completely positive map from
$C^*(F(n,c))\otimes_{\min}C^*(F(n,c))$ into $\cl B(\bb{C}^p\otimes \bb{C}^q)$ and let 
$s_{\epsilon}$ be the state of $C^*(F(n,c))\otimes_{\min}C^*(F(n,c))$ given by 
$s_{\epsilon}(u) = \langle (\pi\otimes_{\min}\rho)(u)\xi,\xi\rangle$, $u\in C^*(F(n,c))\otimes_{\min}C^*(F(n,c))$. 
Let $s$ be a weak* cluster point of the set $\{s_{1/m}\}_{m\in \bb{N}}$. 
Then $s$ satisfies (\ref{cc}). 
\end{proof}

The following proposition describes the relations between the 
introduced parameters.

\begin{prop}\label{p_ineq}
Let $G$ be a graph on $n$ vertices.  Then
\[\chi_{\qmax}(G) \le \chi_{\qc}(G) \le \chi_{\qmin}(G) \le \chi_{\qs}(G) \le \chi_{\q}(G)\le \chi(G).\]
\end{prop}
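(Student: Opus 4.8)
The plan is to establish the chain of inequalities one link at a time, working from the outside in and using the tensor-product characterisations already proved.

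First, $\chi(G) \geq \chi_{\q}(G)$ is exactly the elementary argument reproduced in the introduction: a classical $d$-colouring yields scalar ($1\times 1$ matrix) POVM's satisfying (\ref{qcdefn}). Next, $\chi_{\q}(G) \geq \chi_{\qs}(G)$ is immediate from the definitions: a quantum $c$-colouring uses POVM's on the finite-dimensional spaces $\bb{C}^p$, $\bb{C}^q$, which are in particular Hilbert spaces, so the witness for $\chi_{\q}$ is automatically a witness for $\chi_{\qs}$. Then $\chi_{\qs}(G) \geq \chi_{\qmin}(G)$ follows because a spacial witness $(E_{v,i})\subseteq\cl B(H)$, $(F_{w,j})\subseteq\cl B(K)$ with unit vector $\xi\in H\otimes K$ satisfying the exact conditions (\ref{qcdefn}) defines a state $s$ on $C^*(F(n,c))\otimes_{\min}C^*(F(n,c))$ via $s(a\otimes b) = \langle(\pi(a)\otimes\rho(b))\xi,\xi\rangle$, where $\pi,\rho$ are the unital completely positive maps associated to the POVM's as in Lemma \ref{l_kas}; restricting to $\cl S(n,c)\otimes_{\min}\cl S(n,c)$ gives a state satisfying (\ref{cc}), and by Theorem \ref{th_qmin}(i)$\Leftrightarrow$(ii) this shows $\chi_{\qmin}(G)\leq c$. (Alternatively one can simply invoke the already-established equality $\chi_{\qmin}(G)=\chi_{\qas}(G)=\chi_{\qa}(G)$ and note a spacial colouring is in particular an approximate spacial colouring for every $\epsilon$.)

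The two remaining links, $\chi_{\qmin}(G)\geq\chi_{\qc}(G)\geq\chi_{\qmax}(G)$, are the heart of the matter and rest on the general functoriality ordering of operator system tensor products. The key fact from \cite{kavruk--paulsen--todorov--tomforde2011} is that for any operator systems $\cl S,\cl T$ one has completely positive quotient maps, equivalently matricial cone inclusions, giving that the identity map on the algebraic tensor product is unital and completely positive as a map $\cl S\otimes_{\max}\cl T \to \cl S\otimes_{\comm}\cl T \to \cl S\otimes_{\min}\cl T$. Consequently any state of $\cl S\otimes_{\min}\cl T$ pulls back (by composition with these unital completely positive identity maps) to a state of $\cl S\otimes_{\comm}\cl T$, and any state of the latter pulls back to a state of $\cl S\otimes_{\max}\cl T$. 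Since the pullback does not change the value on an elementary tensor $e_{v,i}\otimes f_{w,j}$, a state witnessing (\ref{cc}) at the $\min$ level is also a state witnessing (\ref{cc}) at the $\comm$ level, and likewise $\comm\Rightarrow\max$. Taking $\cl S=\cl T=\cl S(n,c)$ yields $\chi_{\qmax}(G)\leq\chi_{\qc}(G)\leq\chi_{\qmin}(G)$ directly from Definition \ref{d4}.

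I expect the only genuine obstacle to be bookkeeping about which direction the tensor-product ordering runs: the minimal structure has the \emph{largest} matricial cones and the maximal the \emph{smallest}, so the natural unital completely positive identity map runs from $\otimes_{\max}$ toward $\otimes_{\min}$, and hence the induced map on states runs the other way, from states of the $\min$ tensor product to states of the $\max$ tensor product. Keeping this straight is what makes the chromatic-number inequalities come out as stated rather than reversed. Everything else is a routine concatenation of the definitions with Theorem \ref{th_qmin} and the cited functoriality of $\otimes_{\min},\otimes_{\comm},\otimes_{\max}$, so no further calculation is needed.
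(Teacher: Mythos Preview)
Your proof is correct and follows essentially the same approach as the paper: the inequalities $\chi_{\qmax}(G)\le\chi_{\qc}(G)\le\chi_{\qmin}(G)$ come from the canonical completely positive identity maps $\otimes_{\max}\to\otimes_{\comm}\to\otimes_{\min}$ (so states propagate in the reverse direction), the inequalities $\chi_{\qs}(G)\le\chi_{\q}(G)\le\chi(G)$ are trivial, and $\chi_{\qmin}(G)\le\chi_{\qs}(G)$ is obtained via Theorem~\ref{th_qmin}. Your extra care about the direction of the tensor-product ordering is well placed but not a point of divergence from the paper's argument.
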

\begin{proof}  
The canonical maps 
$$\cl S(n,c)\otimes_{\max}\cl S(n,c)\longrightarrow \cl S(n,c)\otimes_{\comm}\cl S(n,c)
\longrightarrow \cl S(n,c)\otimes_{\min}\cl S(n,c)$$ are completely positive; thus, 
every state on $\cl S(n,c)\otimes_{\min}\cl S(n,c)$ 
(resp. $\cl S(n,c)\otimes_{\comm}\cl S(n,c)$) 
is also a state
when considered as a map on $\cl S(n,c)\otimes_{\comm}\cl S(n,c)$
(resp. $\cl S(n,c)\otimes_{\max}\cl S(n,c)$). 
It follows that
$\chi_{\qmax}(G) \le \chi_{\qc}(G) \le \chi_{\qmin}(G)$.
The inequalities $\chi_{\qs}(G) \le \chi_{\q}(G)\le \chi(G)$ are trivial. 
The inequality $\chi_{\qmin}(G) \le \chi_{\qs}(G)$ follows from Theorem \ref{th_qmin}. 
\end{proof}

\begin{corollary}\label{c_kc}
If the Kirchberg Conjecture holds true then $\chi_{\qc}(G) = \chi_{\qmin}(G)$
for every graph $G$. 
\end{corollary}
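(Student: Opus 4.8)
The plan is to recall the statement of the Kirchberg Conjecture in the form most convenient here and then feed it into Theorem~\ref{th_qmin} together with the characterisation of $\chi_{\qc}$ proved in the theorem preceding it. The Kirchberg Conjecture is equivalent to the assertion that $C^*(\bb{F}_\infty)\otimes_{\min} C^*(\bb{F}_\infty) = C^*(\bb{F}_\infty)\otimes_{\max} C^*(\bb{F}_\infty)$, and (by a standard free-product argument, since every $C^*(F(n,c))$ is a quotient of $C^*(\bb{F}_\infty)$ and the maximal/minimal tensor norms behave well under quotients of one factor) this forces $C^*(F(n,c))\otimes_{\min} C^*(F(n,c)) = C^*(F(n,c))\otimes_{\max} C^*(F(n,c))$ for all $n,c$. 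I would state this reduction as the first step, citing Kirchberg's work (or Pisier's book) for the passage from $\bb{F}_\infty$ to arbitrary free products of finite cyclic groups; this is the only point where something beyond the results already in the excerpt is invoked.

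Granting that, the argument is a short chase through the two theorems. First I would note that, by Theorem~\ref{th_qmin}, $\chi_{\qmin}(G)\le c$ if and only if there is a state $s$ on $C^*(F(n,c))\otimes_{\min} C^*(F(n,c))$ satisfying the colouring conditions (\ref{cc}). Under the Kirchberg Conjecture this is the same as a state on $C^*(F(n,c))\otimes_{\max} C^*(F(n,c))$ satisfying (\ref{cc}). Restricting such a state to the operator subsystem $\cl S(n,c)\otimes_{\rm c}\cl S(n,c)$, which embeds completely order isomorphically into $C^*(F(n,c))\otimes_{\max} C^*(F(n,c))$ by Lemma~\ref{l_kas} (with all $\cl A_i=\cl B_j=\ell^\infty_c$), gives a state on $\cl S(n,c)\otimes_{\rm c}\cl S(n,c)$ satisfying (\ref{cc}), i.e. $\chi_{\qc}(G)\le c$. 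Hence $\chi_{\qc}(G)\le \chi_{\qmin}(G)$, and combined with the reverse inequality already recorded in Proposition~\ref{p_ineq} we get equality for every $G$.

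The converse inequality $\chi_{\qc}(G)\le\chi_{\qmin}(G)$ is already free from Proposition~\ref{p_ineq}, so nothing further is needed there; one only has to make sure the direction actually used is the one that requires the hypothesis, namely $\chi_{\qmin}(G)\le\chi_{\qc}(G)$. I expect the main (and really only) obstacle to be the first step: verifying cleanly that the Kirchberg Conjecture in its usual formulation implies $C^*(F(n,c))\otimes_{\min} C^*(F(n,c)) = C^*(F(n,c))\otimes_{\max} C^*(F(n,c))$. Since $C^*(F(n,c))$ is nuclear precisely when it is finite dimensional, this is not automatic, but it follows from the fact that $C^*(F(n,c))$ is a quotient of a free group C*-algebra together with the exactness/lifting properties packaged into the equivalent forms of Kirchberg's QWEP conjecture; I would cite this rather than reprove it. Everything after that is the two-line diagram chase above.
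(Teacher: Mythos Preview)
Your overall strategy matches the paper's: reduce to the equality
\[
C^*(F(n,c))\otimes_{\min} C^*(F(n,c)) = C^*(F(n,c))\otimes_{\max} C^*(F(n,c))
\]
(which the paper attributes to \cite{fritz} as a consequence of the Kirchberg Conjecture), and then combine this with Lemma~\ref{l_kas}. The paper phrases the conclusion more cleanly at the operator system level: since $\cl S(n,c)\otimes_{\min}\cl S(n,c)\coisubset C^*(F(n,c))\otimes_{\min}C^*(F(n,c))$ and, by Lemma~\ref{l_kas}, $\cl S(n,c)\otimes_{\rm c}\cl S(n,c)\coisubset C^*(F(n,c))\otimes_{\max}C^*(F(n,c))$, equality of the ambient C*-algebras forces $\cl S(n,c)\otimes_{\min}\cl S(n,c) = \cl S(n,c)\otimes_{\rm c}\cl S(n,c)$, and hence $\chi_{\qmin}(G)=\chi_{\qc}(G)$ immediately.

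Your state-level version has the right ingredients but is written in the wrong direction. You start from $\chi_{\qmin}(G)\le c$, produce a state on $C^*(F(n,c))\otimes_{\min}C^*(F(n,c))$, pass to $\otimes_{\max}$, and then \emph{restrict} to $\cl S(n,c)\otimes_{\rm c}\cl S(n,c)$ to conclude $\chi_{\qc}(G)\le c$. That only yields $\chi_{\qc}(G)\le\chi_{\qmin}(G)$, which is already Proposition~\ref{p_ineq} and uses no hypothesis. The sentence ``combined with the reverse inequality already recorded in Proposition~\ref{p_ineq}'' is therefore backwards: Proposition~\ref{p_ineq} gives exactly the inequality you just reproved, not its reverse. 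To obtain the nontrivial direction $\chi_{\qmin}(G)\le\chi_{\qc}(G)$ you must run the chain the other way: start from a state on $\cl S(n,c)\otimes_{\rm c}\cl S(n,c)$ satisfying (\ref{cc}), \emph{extend} it (via Lemma~\ref{l_kas} and Krein's theorem) to a state on $C^*(F(n,c))\otimes_{\max}C^*(F(n,c))$, invoke the Kirchberg hypothesis to view it on $\otimes_{\min}$, and then apply Theorem~\ref{th_qmin}(ii). With that reversal the argument is correct and essentially the paper's.

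One further remark: your sketch of why the Kirchberg Conjecture implies the displayed equality for $F(n,c)$ via ``quotients of $C^*(\bb{F}_\infty)$ and good behaviour of $\min$/$\max$ under quotients'' is not a safe route, since $\otimes_{\min}$ does not in general pass to quotients. The paper simply cites \cite{fritz} for this implication; you should do likewise rather than rely on the quotient heuristic.
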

\begin{proof} 
The validity of the  Kirchberg Conjecture implies \cite{fritz} that
$$C^*(F(n,c)) \otimes_{\min} C^*(F(n,c)) = C^*(F(n,c)) \otimes_{\max} C^*(F(n,c));$$
by Lemma \ref{l_kas}, this implies that 
$\cl S(n,c) \otimes_{\min} \cl S(n,c) = \cl S(n,c) \otimes_{\rm c}\cl S(n,c)$. 
Thus, $\chi_{\qmin}(G) = \chi_{\qc}(G)$.
\end{proof}

\section{Connections with non-signalling boxes}\label{s_nsb}

In this section, we express the quantum chromatic numbers introduced above in dual terms. 
This will allow us to exhibit a connection between the quantities we introduced and 
non-signalling boxes arising from quantum correlations. 
Recall first \cite{ce} that, given a finite dimensional operator system $\cl S$, 
its normed space dual can be equipped with a natural matricial ordering. 
Moreover, any faithful state on $\cl S$ is an Archimedean order unit for this ordering;
the resulting operator system is denoted by $\cl S^d$ (note that a specific choice 
of a faithful state is always made in advance). 

We will need the following result \cite[Theorem 5.9]{fkpt}:

\begin{theorem}\label{th_sncdual}
Let $\cl V(n,c)$ be the operator system dual of $\cl S(n,c)$ and 
$\sigma: \ell^{\infty}_c \to \bb C$ be given by $\sigma(( x_1,\dots, x_c)) = \sum_{i=1}^c x_i$. 
Then 
$$\cl V(n,c) \cong \{(x_1,\dots,x_n) : x_i\in \ell_c^{\infty}, \sigma(x_i) = \sigma(x_j), i,j = 1,\dots,n\}
\coisubset \ell_{nc}^{\infty},$$
up to a complete order isomorphism.
\end{theorem}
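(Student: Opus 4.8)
The plan is to compute the operator system dual of the coproduct $\cl S(n,c) = \ell^\infty_c \oplus_1 \cdots \oplus_1 \ell^\infty_c$ directly from the universal property of the coproduct. The key structural fact is that a unital completely positive map out of $\cl S(n,c)$ is exactly an $n$-tuple of unital completely positive maps out of the summands $\ell^\infty_c$, and dually, the positive cone of $\cl S(n,c)^d$ should be describable coordinate-by-coordinate. First I would recall that, since each $\ell^\infty_c$ is a (finite-dimensional) C*-algebra, the coproduct dualizes to a ``product'' construction: the matrix ordering on $\cl S(n,c)^d$ is the one pulled back from the $n$ individual dual operator systems $(\ell^\infty_c)^d$, subject to the compatibility that all $n$ components share a common value of the distinguished Archimedean order unit. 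Concretely, choosing the faithful state on $\cl S(n,c)$ whose restriction to each summand is $\sigma/c$ (or a suitable normalization), the order unit of $\cl S(n,c)^d$ corresponds under the identification with the constraint $\sigma(x_i) = \sigma(x_j)$.

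Next I would carry out the computation of $(\ell^\infty_c)^d$ itself. Since $\ell^\infty_c$ is abelian, there is a canonical pairing identifying $\ell^\infty_c$ with its own dual as a vector space, and one checks that with respect to the faithful state $\sigma$ (up to scaling), the operator system $(\ell^\infty_c)^d$ is again completely order isomorphic to $\ell^\infty_c$ — this is a standard self-duality for the $c$-dimensional abelian C*-algebra, and the Archimedean order unit becomes the vector $(1,\dots,1)$ scaled appropriately, whose image under the pairing is $\sigma$. Then I assemble: an element of $\cl S(n,c)^d$ is a functional on $\cl S(n,c)$, equivalently (by the coproduct universal property applied at the level of states, and its matricial amplifications) a tuple of functionals on the summands, i.e. an element $(x_1,\dots,x_n) \in \ell^\infty_c \oplus \cdots \oplus \ell^\infty_c = \ell^\infty_{nc}$. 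The positivity in $\cl S(n,c)^d$ translates, via the self-duality of each summand, into positivity of each $x_i$ in $\ell^\infty_c$, hence into coordinatewise positivity in $\ell^\infty_{nc}$; and the order-unit normalization forces $\sigma(x_1) = \cdots = \sigma(x_n)$. This exhibits the claimed inclusion $\coisubset \ell^\infty_{nc}$.

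I expect the main obstacle to be handling the \emph{matricial} orderings rather than just the scalar-level cones: one must verify that the complete order structure on $\cl S(n,c)^d$ — not merely its order structure at level $1$ — is captured by the coordinatewise description, which requires knowing how the coproduct interacts with duality at every matrix level $M_k(\cdot)$. The clean way around this is to invoke the general duality theory of \cite{ce} together with the description of coproducts in \cite{kavruk2011, fkpt}: the coproduct of operator systems is the categorical dual of the (matricial) direct sum, and operator system duality (for finite-dimensional operator systems) is a contravariant functor that is involutive; hence $\big(\bigoplus_1 \ell^\infty_c\big)^d = \bigoplus_{\text{(appropriate sense)}} (\ell^\infty_c)^d$, and the right-hand side is computed from the self-duality of $\ell^\infty_c$. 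A secondary technical point is pinning down the correct scaling of the faithful state so that the Archimedean order unit of $\cl V(n,c)$ matches the element $(1,\dots,1) \in \ell^\infty_{nc}$ under the stated identification, and checking that the constraint $\sigma(x_i)=\sigma(x_j)$ is exactly the condition that the tuple lies in the range of the embedding $\cl S(n,c)^d \hookrightarrow \ell^\infty_{nc}$; this is a direct, if fiddly, verification once the functorial framework is in place. Since the statement is quoted from \cite[Theorem 5.9]{fkpt}, I would ultimately just cite that result and sketch this derivation as motivation.
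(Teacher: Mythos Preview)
Your proposal is correct and matches the paper: the paper does not prove this theorem at all but simply imports it as \cite[Theorem 5.9]{fkpt}, and you explicitly arrive at the same conclusion in your final sentence.

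Your motivational sketch is reasonable, though the cleanest route (and essentially what \cite{fkpt} does) is slightly different from your ``coproduct dualizes to product'' phrasing. The coproduct $\cl S(n,c) = \ell^\infty_c \oplus_1 \cdots \oplus_1 \ell^\infty_c$ is realised as an operator system \emph{quotient} of $\ell^\infty_{nc}$ by the kernel $J$ spanned by the differences $1_v - 1_w$ of the units of the summands. Duality for finite-dimensional operator systems sends quotients to complete order embeddings, so $\cl S(n,c)^d \coisubset (\ell^\infty_{nc})^d \cong \ell^\infty_{nc}$, with image the annihilator $J^\perp$. Computing $J^\perp$ immediately gives the constraint $\sigma(x_i) = \sigma(x_j)$. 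This handles all matrix levels at once (since quotient/subsystem duality is a statement about complete order structures), which addresses the ``main obstacle'' you flagged more directly than the functorial argument you outlined.
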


The duality between $\cl V(n,c)$ and $\cl S(n,c)$ is given as follows:
if $$u = \sum_{v,i} \lambda_{v,i}e_{v,i}\in \cl S(n,c)$$ and 
$f = ((x_{v,i})_{i=1}^c)_{v\in V}$, then $\langle u,f\rangle = \sum_{v,i} \lambda_{v,i} x_{v,i}$. 

For $k\in \bb{N}$, let 
$\rho_k : \ell_k^{\infty} \otimes \ell_k^{\infty} \to M_k$ be the mapping given by 
$$\rho((\lambda_1,\dots,\lambda_{k})\otimes (\mu_1,\dots,\mu_{k})) 
= (\lambda_i\mu_j)_{i,j=1}^{k}.$$
The mapping $\rho_k$ sends the algebraic tensor product $\ell_{nc}\otimes\ell_{nc}$
onto $M_{nc}$.

For a matrix $A\in M_c$, let $R_i(A)$ (resp. $C_i(A)$) denote its $i$th row (resp. column). 
Let 
$$\cl M(n,c) \stackrel{def}{=} \{(A_{i,j})_{i,j=1}^n\in M_n(M_c) : 
\sigma(R_p(A_{i,j})) = \sigma(R_p(A_{i,k}))$$
$$\mbox{ and } 
\sigma(C_p(A_{i,j})) = \sigma(C_p(A_{k,j})), i,j,k = 1,\dots,n, p = 1,\dots,c\}.$$
Note that 
$$\rho_{nc}(\cl V(n,c)\otimes \cl V(n,c)) = \cl M(n,c).$$ 
Indeed, the inclusion of $\rho_{nc}(\cl V(n,c)\otimes \cl V(n,c))$ into the right hand side 
follows directly from the definition of $\rho_{nc}$. 
Conversely, let $A\in \cl M(n,c)$. Letting $\delta_s$ be the evaluation functional 
on $\ell_{nc}^{\infty}$ on the $s$-th coordinate, we see that 
$\id\otimes\delta_s$ and $\delta_s\otimes \id$ map $\rho^{-1}(A)$ into $\cl V(n,c)$. 
This shows that $\rho^{-1}(A)\in \cl V(n,c)\otimes\cl V(n,c)$, and the claim is proved.
We have 
$\dim(\cl S(n,c)) = \dim(\cl V(n,c)) = nc -(n-1) = n(c-1) +1$, 
so that $\dim(\cl V(n,c) \otimes \cl V(n,c)) = \dim\cl M(n,c) = n^2(c-1)^2 + 2n(c-1) +1.$
We note that the spaces $\cl M(n,c)$ are higher dimensional versions of the 
space of non-signaling boxes, see \cite{fkpt_dg}.

We can now express some of the quantum chromatic numbers 
introduced in Section \ref{s_qcn} in terms of the space $\cl M(n,c)$. 
Set 
$$\cl M(n,c)_{\min}^+ = \rho_{nc}((\cl V(n,c)\otimes_{\min} \cl V(n,c))^+)$$
and 
$$\cl M(n,c)_{\max}^+ = \rho_{nc}((\cl V(n,c)\otimes_{\max} \cl V(n,c))^+).$$
Note that 
$$\cl V(n,c)\otimes_{\min}\cl V(n,c)\coisubset \ell_{(nc)^2}$$
and so $\cl M(n,c)_{\min}^+$ consists of all matrices in $\cl M(n,c)$ with non-negative entries. 
Also note that, by \cite[Proposition 1.9]{fp}, 
$$(\cl S(n,c) \otimes_{\max}\cl S(n,c))^d = \cl V(n,c)\otimes_{\min}\cl V(n,c)$$
and 
$$(\cl S(n,c) \otimes_{\min}\cl S(n,c))^d = \cl V(n,c)\otimes_{\max}\cl V(n,c).$$

\begin{lemma}\label{l_sind}
Let $G = (V,E)$ be a graph.
Let $s\in \cl V(n,c)\otimes_{\min}\cl V(n,c)$ be a state on $\cl S(n,c) \otimes_{\max}\cl S(n,c)$
and write $\rho_{nc}(s) = (A_{v,w})_{v,w\in V}$, where $A_{v,w}\in M_c$, $v,w\in V$. 
Then $s$ satisfies conditions (\ref{cc}) if and only if $A_{v,v}$ is a diagonal matrix for each $v\in V$
and $A_{v,w}$ has a zero diagonal whenever $(v,w)\in E$. 
\end{lemma}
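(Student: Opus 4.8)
The plan is to unwind the definition of the pairing $\langle u, f\rangle$ between $\cl S(n,c)$ and $\cl V(n,c)$ and compare it coordinate-by-coordinate with the entries of the matrix $\rho_{nc}(s)$. Recall that a state $s\in\cl V(n,c)\otimes_{\min}\cl V(n,c)$, viewed inside $\ell^\infty_{(nc)^2}$, is just a point of $\Delta_{n,c}\times\Delta_{n,c}$-indexed nonnegative data; more usefully, writing $\rho_{nc}(s) = (A_{v,w})_{v,w\in V}$ with $A_{v,w}\in M_c$, the definition of $\rho_{nc}$ together with the duality pairing gives, for the generators $e_{v,i}$ of the first copy and $f_{w,j}$ of the second copy of $\cl S(n,c)$, the identity
\[
s(e_{v,i}\otimes f_{w,j}) = (A_{v,w})_{i,j}.
\]
This is the one computation I would actually carry out in detail: chase $e_{v,i}$ through its image in $\cl V(n,c)^*\cong\cl S(n,c)$ under the pairing of Theorem~\ref{th_sncdual}, and match it against the $(i,j)$-entry extraction $A\mapsto A_{i,j}$ performed by $\rho_{nc}$ on elementary tensors. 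Once this identity is in hand, the lemma becomes a direct translation.

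From there the argument is purely bookkeeping. The first family of conditions in (\ref{cc}) reads $s(e_{v,i}\otimes f_{v,j}) = 0$ for all $v$ and all $i\ne j$, which by the boxed identity says exactly $(A_{v,v})_{i,j} = 0$ for $i\ne j$, i.e. $A_{v,v}$ is diagonal, for each $v\in V$. The second family reads $s(e_{v,i}\otimes f_{w,i}) = 0$ for all $(v,w)\in E$ and all $i$, which says exactly $(A_{v,w})_{i,i} = 0$ for all $i$, i.e. $A_{v,w}$ has zero diagonal, whenever $(v,w)\in E$. Both implications are immediate equivalences, so no convexity or extension arguments are needed here; the content is entirely in the identification of $s(e_{v,i}\otimes f_{w,j})$ with the matrix entry.

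The only genuine obstacle is getting the bookkeeping of Theorem~\ref{th_sncdual} and the map $\rho_{nc}$ to line up cleanly — in particular making sure the normalisation of the duality pairing $\langle u,f\rangle = \sum_{v,i}\lambda_{v,i}x_{v,i}$ is compatible with the row/column-sum description of $\cl V(n,c)$ inside $\ell^\infty_{nc}$, and that the coordinate $s$ as an element of $\ell^\infty_{(nc)^2}$ is the one whose $\rho_{nc}$-image has $(i,j)$-entry of its $(v,w)$-block equal to $s$ evaluated on the $((v,i),(w,j))$ coordinate. Since $\rho_{nc}$ is defined on elementary tensors by $(\lambda_k)\otimes(\mu_l)\mapsto(\lambda_i\mu_j)_{i,j}$ and $\cl V(n,c)$ sits in $\ell^\infty_{nc}$, this is a finite and essentially notational check; I expect no conceptual difficulty, only the need to fix conventions consistently with the earlier part of the section.
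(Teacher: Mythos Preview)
Your proposal is correct and follows essentially the same approach as the paper: the key step is the identification $s(e_{v,i}\otimes f_{w,j}) = (A_{v,w})_{i,j}$, after which conditions (\ref{cc}) translate directly into the stated diagonal constraints. The paper's proof records exactly this identification and then appends some additional remarks about non-negativity of entries and the connection with non-signalling boxes, but those remarks are not needed for the biconditional as stated; your write-up is in fact more streamlined.
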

\begin{proof}
The claim follows from the fact that the element of $\cl M(n,c)$ corresponding to a state 
$s$ on $\cl S(n,c) \otimes_{\max}\cl S(n,c)$ is $((s(e_{v,i}\otimes f_{w,j})_{i,j=1}^c)_{v,w\in V}$.
By duality, these matrices are the coefficients of the positive elements of $\cl V(n,c) \otimes_{\min} \cl V(n,c).$  The set of matrices that can be obtained in this fashion is just the set of all matrices with non-negative entries that satisfy the linear constraints given by the requirement that $\sigma(x_i) = \sigma(x_j).$ The result follows by applying the reasoning in \cite[Theorem~7.1]{fkpt_dg} which shows that in the case $c=2$ the matrices that one can obtain in this fashion are, up to scaling, the matrices that are non-signalling boxes.
\end{proof}

\begin{proposition}\label{p_qmaxc}
For every graph $G$ on more than one vertices, we have $\chi_{\qmax}(G) = 2$.
\end{proposition}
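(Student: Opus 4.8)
The plan is to show $\chi_{\qmax}(G)=2$ for every graph $G$ with more than one vertex, by exhibiting an explicit state on $\cl S(n,2)\otimes_{\max}\cl S(n,2)$ satisfying the constraints (\ref{cc}) with $c=2$, and by observing that $c=1$ is impossible as soon as $G$ has an edge (and is in any case vacuously excluded since a state satisfying (\ref{cc}) for $c=1$ would have to satisfy $s(e_{v,1}\otimes f_{w,1})=0$ on every edge while $e_{v,1}=f_{w,1}=1$ forces $s=0$, contradicting that $s$ is a state; if $G$ has no edges then the first line of (\ref{cc}) is vacuous for $c=1$, but then one also needs to check whether the paper intends $\chi_{\qmax}\geq 2$ always — I will treat this by noting the statement asserts equality and the lower bound $\chi_{\qmax}(G)\ge 2$ will follow from the edgeless-case discussion or from the convention that $n>1$; the substantive content is the upper bound $\chi_{\qmax}(G)\le 2$).

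The key step is the upper bound. First I would use the dual description from Lemma \ref{l_sind} and Theorem \ref{th_sncdual}: a state on $\cl S(n,2)\otimes_{\max}\cl S(n,2)$ satisfying (\ref{cc}) corresponds, via $\rho_{nc}$, to an element $(A_{v,w})_{v,w\in V}$ of $\cl M(n,2)$ with non-negative entries, with $A_{v,v}$ diagonal for all $v$ and $A_{v,w}$ having zero diagonal whenever $(v,w)\in E$, and with the normalisation that $\sigma$ applied to any row or column block is the common value determined by the state being unital (here $c=2$, so each $A_{v,w}$ is a $2\times 2$ matrix). So the task reduces to constructing a family of non-negative $2\times2$ matrices $A_{v,w}$ whose row-sums and column-sums satisfy the $\cl M(n,2)$ compatibility constraints (all row-$p$ sums of $A_{i,j}$ over $j$ agree, all column-$p$ sums of $A_{i,j}$ over $i$ agree, $p=1,2$), with $A_{v,v}=\mathrm{diag}(a,b)$ and $A_{v,w}=\begin{pmatrix}0&*\\ *&0\end{pmatrix}$ on edges.

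The natural candidate is the one that in the $c=2$, non-signalling-box language is the ``PR-box''-type correlation: take $A_{v,w}=\tfrac14\begin{pmatrix}1&1\\1&1\end{pmatrix}$ whenever $(v,w)\notin E$ and $v\neq w$, take $A_{v,v}=\tfrac14\begin{pmatrix}2&0\\0&2\end{pmatrix}=\tfrac12 I$, and take $A_{v,w}=\tfrac14\begin{pmatrix}0&2\\2&0\end{pmatrix}$ when $(v,w)\in E$; every block then has all row- and column-sums equal to $\tfrac12$, so the compatibility and normalisation conditions hold, all entries are non-negative, and the required diagonal constraints are visibly satisfied. By Lemma \ref{l_sind} this corresponds to a genuine state on $\cl S(n,2)\otimes_{\max}\cl S(n,2)$ obeying (\ref{cc}), proving $\chi_{\qmax}(G)\le 2$. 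Combined with the observation that $c=1$ cannot work (for $G$ with an edge, as above; and the hypothesis $n>1$ together with the convention $\chi_{\qmax}\ge\chi(G)\ge$ or the fact that $\chi_{\qmax}$ is at least the corresponding quantity which is $\ge 2$ when $n>1$ — I would just remark that a $1$-colouring state would force $s(e_{v,1}\otimes f_{w,1})=s(1\otimes 1)=1\neq 0$), we get equality.

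The main obstacle I anticipate is purely bookkeeping: verifying that the proposed matrix of $2\times2$ blocks genuinely lies in $\cl M(n,2)$ and that its preimage under $\rho_{nc}$ is a \emph{positive} element of $\cl V(n,2)\otimes_{\min}\cl V(n,2)$ rather than merely an element of the algebraic tensor product with the right linear constraints — but this is exactly the content of Lemma \ref{l_sind} (positivity in the min tensor product of $\cl V(n,c)$, which sits $\coisubset\ell^\infty$, is just entrywise non-negativity), so once the block matrix is written down the verification is immediate. A secondary point to handle cleanly is the lower bound $\chi_{\qmax}(G)\ge 2$; the cleanest route is to note that for $c=1$ the elements $e_{v,1}$ and $f_{w,1}$ are the respective units, so any state $s$ has $s(e_{v,1}\otimes f_{w,1})=1$, which violates the second condition of (\ref{cc}) when $E\neq\emptyset$, and to observe that the statement's hypothesis ($|V|>1$) is used to guarantee the problem is non-degenerate; if $G$ is edgeless one argues directly that $\chi_{\qmax}(G)=\chi(G)=1$ would contradict the claim, so the intended reading is that $G$ has at least one edge, or else one adopts the convention that these quantum chromatic numbers are always at least $2$ as in \cite{cnmsw}.
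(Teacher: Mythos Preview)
Your approach is correct and essentially the same as the paper's: both reduce the upper bound to exhibiting a non-negative element of $\cl M(n,2)$ with the diagonal/zero-diagonal pattern dictated by Lemma~\ref{l_sind}. The paper's explicit choice is slightly cleaner than yours---it sets $A_{v,v}=I_2$ and $A_{v,w}=\begin{pmatrix}0&1\\1&0\end{pmatrix}$ for \emph{all} $v\neq w$, without distinguishing edges from non-edges---but your three-case construction is equally valid. Your discussion of the lower bound $\chi_{\qmax}(G)\geq 2$ is actually more careful than the paper's one-line appeal to Lemma~\ref{l_sind}; in particular you correctly flag that the edgeless case requires either the presence of an edge or a convention.
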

\begin{proof}
Set $A_{v,v} = I_2$, $A_{v,w} = H := \begin{pmatrix} 0&1\\1&0 \end{pmatrix}$ if $v \ne w$, and 
$A = (A_{v,w})_{v,w\in V}$. Then $A\in \cl M(n,c)_{\min}^+$ and, by 
Lemma \ref{l_sind}, $\rho_{nc}^{-1}(A)$ is a state satisfying (\ref{cc}). It follows that $\chi_{\qmax}(G)\leq 2$.
The fact that $\chi_{\qmax} > 1$ if $|V| > 1$ follows from Lemma \ref{l_sind}.
\end{proof}

\section{The vectorial chromatic number}

In this section we introduce another parameter of a graph, which we call 
the vectorial chromatic number, and investigate its relations with the quantum chromatic numbers
defined in Section \ref{s_qcn}. The vectorial chromatic number will prove useful in 
supplementing the inequalities established in Proposition \ref{p_ineq} by an important 
lower bound. 

After writing this section, we learned of the recent preprint of \cite{cmrssw}, which includes a parameter that is equivalent to our vectorial chromatic number.  Many of our results can be derived from their results, yet our proofs are generally quite different.

\begin{defn}\label{d_vec}
Let $G=(V,E)$ be a  graph and $c\in \bb{N}$.  A \emph{vectorial $c$-colouring} of $G$
is a set of vectors $\{x_{v,i} : v \in V, 1 \le i \le c\}$ in a Hilbert space such that
$$\langle x_{v,i}, x_{w,j} \rangle \ge 0, \ \ \ v,w\in V, 1\leq i,j\leq c,$$
$$\sum_{i=1}^c x_{v,i} = \sum_{i=1}^c x_{w,i}, \ \ \ 
\left\| \sum_{i=1}^c x_{v,i} \right\| = 1, \ \ \  v,w \in V,$$
$$\langle x_{v,i}, x_{v,j} \rangle = 0,  \ \ \ v\in V, i \ne j,$$
$$\langle x_{v,i},x_{w,i} \rangle  = 0, \ \ \ (v,w) \in E, 1\leq i\leq c.$$
The least integer $c$ for which there exists a vectorial $c$-colouring, will be denoted $\chi_{\rm vect}(G)$ 
and called the \emph{vectorial chromatic number} of G.
\end{defn}

\begin{remark} Our vectorial chromatic number is not equal to the chromatic number $\chi_{\rm vec}(G)$ studied in \cite{SS12}. In fact, if $C_5$ denotes the 5-cycle then $\chi_{\rm vec}(C_5) \le \vartheta(C_5) = \sqrt{5},$ while it follows from Corollary~\ref{chrom=3}, that $\chi_{\rm vect}(C_5) = \chi(C_5) =3.$
\end{remark}

\begin{remark} Let $G$ and $H$ be graphs. In \cite[Definition~3]{cmrssw}, the concept of  $G \stackrel{+}{\to}  H$ is defined.  If $K_c$ denotes the complete graph on $c$ vertices, then it follows that $\chi_{\rm vect}(G) =c$ if and only if $G \stackrel{+}{\to} K_c.$
\end{remark} 

The vectorial chromatic  number of a graph $G$ is a relaxed geometric version of the 
relativistic quantum chromatic number of $G$, as can be seen from the proof 
of the next proposition.  In \cite{cmrssw}, this result follows from the
facts that $G \stackrel{q}{\to} K_c \implies G \stackrel{+}{\to} K_c.$

\begin{prop}\label{p_vec}
Let $G=(V,E)$ be a graph.  Then $\chi_{\vecc}(G) \le \chi_{\qc}(G).$
\end{prop}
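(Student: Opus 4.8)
The plan is to construct a vectorial $c$-colouring directly from the data witnessing $\chi_{\qc}(G) \le c$. By the previous theorem, $\chi_{\qc}(G) \le c$ means there exist a Hilbert space $H$, a unit vector $\xi \in H$, and PVM's $(E_{v,i})_{i=1}^c$, $(F_{w,j})_{j=1}^c$ on $H$ with commuting ranges satisfying conditions (\ref{qcreldefn}). First I would set $x_{v,i} = E_{v,i}\xi$ and attempt to verify the four requirements of Definition \ref{d_vec} for the family $\{x_{v,i}\}$. The sum condition is immediate: $\sum_{i=1}^c x_{v,i} = \left(\sum_{i=1}^c E_{v,i}\right)\xi = \xi$ for every $v$, which is independent of $v$ and has norm $1$. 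The orthogonality conditions $\langle x_{v,i}, x_{v,j}\rangle = \langle E_{v,j}E_{v,i}\xi,\xi\rangle = 0$ for $i \ne j$ follow since the $E_{v,i}$ are mutually orthogonal projections (a PVM), and in particular this is stronger than the corresponding condition involving $F_{v,j}$ in (\ref{qcreldefn}).

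The two conditions that require the commuting $F$'s are the non-negativity of inner products and the edge-orthogonality. For the inner product sign, I would write $\langle x_{v,i}, x_{w,j}\rangle = \langle E_{w,j}E_{v,i}\xi,\xi\rangle$; this need not obviously be non-negative from the $E$'s alone. The trick is to pass through the commuting PVM $F$: since $E_{v,i} = E_{v,i}^2$ and using that $E$ and $F$ ranges commute, one can try to realize the inner product as $\langle \cdot \xi, \cdot \xi\rangle$ of genuinely positively-correlated vectors, or alternatively replace $x_{v,i}$ by $E_{v,i}F_{v,i}\xi$ so that $\langle x_{v,i}, x_{w,j}\rangle = \langle E_{w,j}F_{w,j}E_{v,i}F_{v,i}\xi,\xi\rangle = \langle (E_{v,i}F_{v,i})(E_{w,j}F_{w,j})\xi,\xi\rangle$, which is $\|Q_{w,j}Q_{v,i}\xi\|^2$-like only if the products $Q_{v,i} := E_{v,i}F_{v,i}$ are themselves projections that relate correctly. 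Here one uses that $E_{v,i}F_{v,i}$ is a product of two commuting projections, hence itself a positive contraction (indeed a projection), so $\langle Q_{v,i}Q_{w,j}\xi,\xi\rangle = \langle Q_{w,j}\xi, Q_{v,i}\xi\rangle$ — still not obviously real unless we symmetrize. The cleanest route I expect is: use the commuting structure to write $\langle x_{v,i}, x_{w,j}\rangle = \langle E_{v,i}F_{w,j}\xi,\xi\rangle$ after checking that the mixed quantity $\langle E_{v,i}F_{w,j}\xi,\xi\rangle$ equals the corresponding "both $E$" or "both $F$" quantity on the relevant indices, and observe that $E_{v,i}F_{w,j} = E_{v,i}F_{w,j}E_{v,i}F_{w,j}/\!\!\ldots$ — concretely, $\langle E_{v,i}F_{w,j}\xi, \xi\rangle = \langle F_{w,j}E_{v,i}\xi, E_{v,i}\xi\rangle \ge 0$ since $F_{w,j} \ge 0$. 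That is the key observation: put $x_{v,i} = E_{v,i}\xi$ but compute the inner product as $\langle x_{v,i}, x_{w,j}\rangle := \langle F_{w,j} x_{v,i}, x_{v,i}\rangle$ — wait, this changes the bilinear form, so instead one should define $x_{v,i}$ to live in a GNS-type space where the inner product is $\langle a,b \rangle_\xi$ built so that positivity is automatic.

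The conceptually correct construction: let $\pi, \rho$ be the commuting representations from the GNS construction (as in the last theorem's proof, via Lemma \ref{l_kas}), so that $t(a \otimes b) = \langle \pi(a)\rho(b)\xi,\xi\rangle$. Define the vectors $x_{v,i} = \pi(e_{v,i})\rho(f_{v,i})\xi$. Then $\langle x_{v,i}, x_{w,j}\rangle = \langle \pi(e_{v,i})\rho(f_{v,i})\pi(e_{w,j})\rho(f_{w,j})\xi, \xi\rangle = \langle \pi(e_{w,j})\rho(f_{w,j})\xi, \pi(e_{v,i})\rho(f_{v,i})\xi\rangle$ using self-adjointness of the projections and commutation — hmm, this gives $\langle x_{w,j}, x_{v,i}\rangle$, i.e. hermitian symmetry but not sign. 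Better: $\langle x_{v,i}, x_{w,j}\rangle = \langle \rho(f_{w,j})\pi(e_{v,i})\xi, \pi(e_{w,j})\rho(f_{v,i})\xi\rangle$; reorganizing with commutation and idempotency, $= \langle \rho(f_{v,i}f_{w,j})\pi(e_{v,i}e_{w,j})\xi,\xi\rangle$ — only clean if $v=w$ or similar. I expect the actual proof defines $x_{v,i} = \pi(e_{v,i})\rho(f_{v,i})\xi$ and uses $\langle x_{v,i},x_{w,j}\rangle = \langle \pi(e_{w,j})\rho(f_{v,i})\pi(e_{v,i})\rho(f_{w,j})\xi,\xi\rangle = \langle (\rho(f_{v,i})\pi(e_{v,i}))^* \rho(f_{w,j})\pi(e_{w,j})\xi,\xi\rangle$; since $P := \rho(f_{v,i})\pi(e_{v,i})$ and $Q := \rho(f_{w,j})\pi(e_{w,j})$ are products of commuting projections hence projections, $\langle Q\xi, P\xi\rangle$ is still not real in general, so the honest fix is to work in the real part / use that the coproduct state is real on these elements. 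The main obstacle, and where I would spend the effort, is exactly pinning down which vectors to use so that $\langle x_{v,i}, x_{w,j}\rangle \ge 0$ holds genuinely (not just $\langle x_{v,i}, x_{w,j}\rangle$ being real), and verifying $\sum_i x_{v,i}$ is independent of $v$ with norm $1$ — the norm-$1$ and $v$-independence will force using $\sum_i \pi(e_{v,i}) = I = \sum_i \rho(f_{v,i})$ together with $\sum_i \pi(e_{v,i})\rho(f_{v,i})\xi$ not obviously collapsing, suggesting the right vectors are $x_{v,i} = \pi(e_{v,i})\xi$ after all, with the inner-product positivity extracted from inserting $\rho(f_{w,j})$ and using $\langle E_{v,i}F_{w,j}\xi,\xi\rangle \ge 0$ plus condition (\ref{qcreldefn}) to match $\langle E_{v,i}E_{w,j}\xi,\xi\rangle$ with $\langle E_{v,i}F_{w,j}\xi,\xi\rangle$ on the needed index pairs — and for the remaining pairs one argues the $E$-inner product is automatically real and non-negative because the vectors are, essentially, restrictions of a common vector under orthogonal projections. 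I would organize the write-up as: (1) invoke the theorem to get commuting PVM's; (2) set $x_{v,i} = E_{v,i}\xi$; (3) verify sum and $E$-orthogonality trivially; (4) verify edge-orthogonality from (\ref{qcreldefn}) second line by the $E$-$F$ matching; (5) verify non-negativity via the $F$-insertion positivity argument; and flag step (5) as the crux.
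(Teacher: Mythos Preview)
Your sketch correctly sets $x_{v,i}=E_{v,i}\xi$ and disposes of the sum condition and the same-vertex orthogonality. But steps (4) and (5) both rest on an ``$E$--$F$ matching'' that you never actually justify. The hypothesis (\ref{qcreldefn}) gives $\langle E_{v,i}F_{w,i}\xi,\xi\rangle=0$ for edges and your $F$-insertion trick gives $\langle E_{v,i}F_{w,j}\xi,\xi\rangle=\langle F_{w,j}E_{v,i}\xi,E_{v,i}\xi\rangle\ge 0$; what you \emph{need}, however, is $\langle E_{w,i}E_{v,i}\xi,\xi\rangle=0$ and $\langle E_{w,j}E_{v,i}\xi,\xi\rangle\ge 0$. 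Since the $E$'s for different vertices do not commute (even in the PVM case), these are genuinely different quantities, and none of your attempted rewrites closes the gap. The alternative choice $x_{v,i}=E_{v,i}F_{v,i}\xi$ fails the sum condition, since $\sum_i E_{v,i}F_{v,i}$ is not $I$ in general.

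The missing idea is precisely that $E_{v,i}\xi=F_{v,i}\xi$ for every $v,i$, after which your steps (4) and (5) go through immediately. The paper obtains this by a Cauchy--Schwarz equality argument: with $x_{v,i}=E_{v,i}\xi$ and $y_{v,i}=F_{v,i}\xi$ one has $1=\langle\xi,\xi\rangle=\sum_{i,j}\langle x_{v,i},y_{v,j}\rangle=\sum_i\langle x_{v,i},y_{v,i}\rangle$ (the off-diagonal terms vanish by the first line of (\ref{qcreldefn})), while $\sum_i\|x_{v,i}\|^2\le 1$ and $\sum_i\|y_{v,i}\|^2\le 1$ from $E_{v,i}^2\le E_{v,i}$, $F_{v,i}^2\le F_{v,i}$. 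Equality in Cauchy--Schwarz then forces $y_{v,i}=\lambda_{v,i}x_{v,i}$, and the already-established non-negativity of $\langle x_{v,i},y_{v,i}\rangle$ pins down $\lambda_{v,i}=1$. Once $x_{v,i}=y_{v,i}$, the edge orthogonality and the non-negativity of $\langle x_{v,i},x_{w,j}\rangle=\langle x_{v,i},y_{w,j}\rangle$ are exactly the facts you had in hand.
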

\begin{proof}
Let $\cl H$ be a Hilbert space, $\xi$ be a unit vector and 
$( E_{v,i})_{i=1}^c$ and $(F_{w,j})_{j=1}^c$ be POVM's satisfying 
the conditions of Definition \ref{d_3} (iv). 
Set $x_{v,i} = E_{v,i} \xi$ and $y_{w,j} = F_{w,j} \xi.$

Note that, since $E_{v,i}F_{w,j} = F_{w,j}E_{v,i}$ and $E_{v,i}\geq 0$, $F_{w,j}\geq 0$, 
we have that
\[ \langle x_{v,i}, y_{w,j} \rangle = \langle y_{w,j}, x_{v,i} \rangle \ge 0.\]

Since $\sum_{i=1}^c E_{v,i} =I,$ we have that $\sum_{i=1}^c x_{v,i} = \xi.$
Moreover, $\langle x_{v,i} , y_{v,j} \rangle =0$, $ i \ne j.$  Hence,
\begin{eqnarray*}
1 = \langle\xi,\xi\rangle  = \sum_{i,j=1}^c \langle x_{v,i}, y_{v,j} \rangle
& = & \sum_{j=1}^c \langle x_{v,j}, y_{v,j} \rangle\\
& \le & \left(\sum_{j=1}^c \|x_{v,j}\|^2 \right)^{1/2} 
\left(\sum_{j=1}^c \|y_{v,j}\|^2\right)^{1/2} .
\end{eqnarray*}
However,
\[ \sum_{j=1}^c \|x_{v,j}\|^2 = \sum_{j=1}^c \langle E_{v,j}^2 \xi, \xi \rangle \le \sum_{j=1}^c \langle E_{v,j} \xi, \xi \rangle= \langle \xi, \xi \rangle =1,\]
and, similarly, $\sum_{j=1}^c \|y_{w,j}\|^2\leq 1$.
Hence, we must have equality throughout, so $y_{v,j} = \lambda_{v,j} x_{v,j}$
for some unimodular scalar $\lambda_{v,j}$. The fact that 
$\langle x_{v,i}, y_{w,j} \rangle \geq 0$ implies that $\lambda_{v,j} = 1$ for all $v$ and $j$.
Thus, 
\[ x_{v,j} = y_{v,j} \text{ and } x_{v,i} \perp x_{v,j} \mbox{ if } i \ne j.\]
%Since $\sum_{j=1}^c \|x_{v,j}\|^2 =1$, $v\in V$, we have that 
%the vectors $x_{v,i}$, $i=1,\dots,c$, are orthogonal, for each $v\in V$. 

Finally, note that if $(v,w)\in E$ then
\[\langle x_{v,i}, x_{w,i} \rangle = \langle x_{v,i}, y_{w,i} \rangle = \langle E_{v,i}F_{w,i}\xi, \xi\rangle = 0.\]
It now follows that the family $\{x_{v,i} : 1\leq i \leq c, v\in V\}$ is a vectorial $c$-colouring of $G$, and 
hence $\chi_{\rm vec}(G)\leq \chi_{\rm qc}(G)$.
\end{proof}

The vectorial chromatic number is sometimes simpler to deal with than the quantum chromatic number.  For example, in \cite{AHKS06} a quantum protocol is created to show that  the Hadamard graph $\Omega_N$,  has a quantum 
$N$-colouring, {\it i.e.}, that $\chi_q(\Omega_N) \le N.$ Recall that $\Omega_N$ is the graph on the $2^N$ vertices, which  are identified with all  $N$-tuples $v=( v(0),...,v(N-1))$ with $v(i) \in \{-1, +1 \}$ and vertex $v$ connected to vertex $w$ if and only if $\langle v,w \rangle =0.$ 

 Since $\chi_{\rm vect}(G) \le \chi_{\rm q}(G),$ showing that $\chi_{\rm vect}(\Omega_N) \le N,$ is a weaker result. The following applies to a broad family of orthogonality graphs.

\begin{prop}\label{orthograph} Let $V \subseteq \bb C^N$ be a set of $M$ vectors such that  $v=( v(0),...,v(N-1)) \in V$ implies that $|v(i)|=1$ and let $G=(V,E)$ be the graph defined by $(v,w) \in E$ if and only if $\langle v,w \rangle =0.$ Then $\chi_{\rm vect}(G) \le N.$
\end{prop}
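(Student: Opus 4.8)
The plan is to construct an explicit vectorial $N$-colouring of $G$ directly from the coordinates of the vectors in $V$, imitating the structure of the protocol in \cite{AHKS06} but in the weaker (relaxed) setting that Definition \ref{d_vec} permits. Since each $v \in V$ has all coordinates of modulus $1$, the natural candidate is to let the ``colours'' be the standard basis directions of $\bb{C}^N$, suitably twisted by the phases $v(i)$. Concretely, I would work in the Hilbert space $\bb{C}^N$ with orthonormal basis $\{\eta_0,\dots,\eta_{N-1}\}$ and, for each vertex $v$ and each colour $i \in \{1,\dots,N\}$, set
\[
x_{v,i} = \frac{1}{\sqrt N}\, \overline{v(i-1)}\, \eta_{i-1},
\]
so that the $x_{v,i}$ for fixed $v$ are an orthogonal family of vectors each of norm $1/\sqrt N$, rescaling a fixed orthonormal basis.

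Next I would verify the four conditions of Definition \ref{d_vec}. The orthogonality within a fixed vertex, $\langle x_{v,i}, x_{v,j}\rangle = 0$ for $i \ne j$, is immediate since distinct colours use distinct basis vectors. The sum condition: $\sum_{i=1}^N x_{v,i} = \frac{1}{\sqrt N}\sum_{k=0}^{N-1} \overline{v(k)}\,\eta_k$ has norm $1$, but it depends on $v$ through the phases, so this naive choice fails the requirement $\sum_i x_{v,i} = \sum_i x_{w,i}$. The fix is to pass to a tensor-product Hilbert space to absorb the $v$-dependent part into a fixed vector: work in $\bb{C}^N \otimes \bb{C}^M$ (indexing the second factor by $V$), and instead define something like $x_{v,i} = \frac{1}{\sqrt N}\,\overline{v(i-1)}\,\eta_{i-1}\otimes \zeta$ where $\zeta$ is a fixed unit vector — but this still carries the phase. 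The cleaner approach, which I expect to work, is to renormalise so that the \emph{sum} is phase-free: use the fact that we only need $\langle x_{v,i}, x_{w,j}\rangle \ge 0$ and the sum condition, and build the vectors in $\bb{C}^N \otimes \bb{C}^N$ as $x_{v,i} = \frac{1}{\sqrt N}\,\eta_{i-1}\otimes(\text{phase-corrected vector depending on }v(i-1))$, arranged so that $\sum_i x_{v,i}$ equals a fixed maximally-entangled-type vector independent of $v$. The non-negativity $\langle x_{v,i},x_{w,j}\rangle \ge 0$ then forces the second tensor factor to pick up $\overline{v(i-1)}w(j-1)$ or its conjugate, and I must check this product is a nonnegative real — which will use $|v(i)| = |w(j)| = 1$ only after summing appropriately, so some care with which inner products must vanish versus merely be nonnegative is needed.

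The crucial condition is the last one: if $(v,w) \in E$, i.e. $\langle v,w\rangle = \sum_{k=0}^{N-1} v(k)\overline{w(k)} = 0$, then $\langle x_{v,i}, x_{w,i}\rangle = 0$ for every colour $i$. With $x_{v,i}$ supported (in the first factor) on $\eta_{i-1}$, the inner product $\langle x_{v,i},x_{w,i}\rangle$ will reduce to $\frac{1}{N}$ times a single term involving $v(i-1)\overline{w(i-1)}$, which is never zero — so again the naive construction is wrong and the second tensor factor must be chosen so that $\langle x_{v,i}, x_{w,i}\rangle$ becomes proportional to $\langle v,w\rangle$ itself. This is exactly the Hadamard-graph trick: take the second factor to encode $v$ via the \emph{whole} vector $v$ (or a Fourier-type transform of it), so that the orthogonality relation $\langle v,w\rangle = 0$ is what kills $\langle x_{v,i},x_{w,i}\rangle$. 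I anticipate that the right definition is, in $\bb{C}^N \otimes \bb{C}^N$ with bases $\{\eta_k\},\{\theta_k\}$,
\[
x_{v,i} = \frac{1}{N}\sum_{k=0}^{N-1} v(i-1)\overline{v(k)}\,\eta_{i-1}\otimes\theta_k,
\]
or a close variant; then $\langle x_{v,i},x_{w,i}\rangle = \frac{1}{N^2}\,\overline{v(i-1)}w(i-1)\sum_k v(k)\overline{w(k)} = \frac{1}{N^2}\,\overline{v(i-1)}w(i-1)\,\overline{\langle w,v\rangle}$, which vanishes exactly on edges. The main obstacle will be choosing the vectors so that \emph{all four} conditions hold simultaneously — in particular reconciling the nonnegativity requirement $\langle x_{v,i},x_{w,j}\rangle \ge 0$ (which wants real positive phases) with the vanishing requirements (which constrain the phase structure) and with the normalisation $\|\sum_i x_{v,i}\| = 1$; I would resolve this by computing $\langle x_{v,i},x_{w,j}\rangle$ for the candidate above, checking it equals $\frac{1}{N^2}\,\overline{v(i-1)}w(j-1)\,\delta_{i,j}\cdot(\text{something})$ plus manifestly nonnegative cross terms, and adjusting constants until the within-vertex orthogonality, the $v$-independence of the sum, and the unit norm all fall out. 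Once the computation is set up correctly, verifying the four conditions is routine linear algebra using only $|v(k)| = 1$.
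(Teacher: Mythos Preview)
Your proposal is headed in the right direction---working in $\bb{C}^N\otimes\bb{C}^N$ and trying to arrange that $\langle x_{v,i},x_{w,i}\rangle$ be a scalar multiple of $\langle v,w\rangle$---but the specific candidate you write down does not work, and the ``adjusting constants'' you propose cannot repair it, because the defect is structural rather than numerical.

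Concretely, with $x_{v,i}=\frac{1}{N}\sum_k v(i-1)\overline{v(k)}\,\eta_{i-1}\otimes\theta_k$ one has
\[
\sum_i x_{v,i}=\frac{1}{N}\Bigl(\sum_i v(i-1)\eta_{i-1}\Bigr)\otimes\Bigl(\sum_k\overline{v(k)}\theta_k\Bigr),
\]
which is (up to scale) $v\otimes\overline v$ and hence genuinely depends on $v$; no rescaling removes this. Moreover, for non-adjacent $v\ne w$ and equal colour $i$ you get $\langle x_{v,i},x_{w,i}\rangle=\frac{1}{N^2}\,v(i-1)\overline{w(i-1)}\langle w,v\rangle$, typically a non-real complex number, so the non-negativity axiom fails too. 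The underlying obstruction is that you encode the colour $i$ as a basis direction $\eta_{i-1}$ in the first tensor factor; once you do that, making $\sum_i x_{v,i}$ independent of $v$ forces the second-factor components to be independent of $v$, which then destroys the edge condition.

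The missing idea is to encode the colour as a \emph{phase} rather than a basis direction. The paper sets $\omega=e^{2\pi i/N}$ and, for colour $l\in\{0,\dots,N-1\}$, takes
\[
x_{v,l}=\frac{1}{N^{3/2}}\sum_{i,j=0}^{N-1} v(i)\overline{v(j)}\,\omega^{(i-j)l}\,e_i\otimes e_j
=\frac{1}{N^{3/2}}\,(D_l v)\otimes\overline{(D_l v)},
\]
where $D_l=\mathrm{diag}(1,\omega^l,\omega^{2l},\dots)$. Then
\[
\langle x_{v,l},x_{w,m}\rangle=\frac{1}{N^3}\Bigl|\sum_{i} v(i)\overline{w(i)}\,\omega^{i(l-m)}\Bigr|^2\ge 0,
\]
so for $l=m$ this equals $\frac{1}{N^3}|\langle v,w\rangle|^2$ (giving the edge condition) and for $v=w$, $l\ne m$ it equals $\frac{1}{N^3}\bigl|\sum_i\omega^{i(l-m)}\bigr|^2=0$ (within-vertex orthogonality). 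Summing over $l$ and using $\sum_l\omega^{(i-j)l}=N\delta_{i,j}$ together with $|v(i)|^2=1$ gives $\sum_l x_{v,l}=N^{-1/2}\sum_i e_i\otimes e_i$, a unit vector independent of $v$. Each check is a one-line computation once this formula is written down; what your sketch lacks is precisely this Fourier/root-of-unity mechanism that simultaneously collapses the colour-sum to the diagonal and forces every inner product to be a modulus squared.
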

\begin{proof}
Let $\omega= e^{2 \pi i/N},$ and for $0 \le l \le N-1$ define vectors $x_{v,l} \in \bb C^N \otimes \bb C^N$ by
\[ x_{v,l} = \frac{1}{N^{3/2}} \sum_{i,j=0}^{N-1} v(i)\overline{v(j)} \omega^{(i-j)l} e_i \otimes e_j ,\]
then it is easy to check that these vectors define a vectorial $N$-colouring of $G.$
\end{proof} 

By Propositions \ref{p_ineq} and \ref{p_vec}, 
$\chi_{\rm vect}(G) \le \chi_{\qc}(G) \le \chi_{\rm q}(G)$; hence, the next result is a slight
improvement of \cite[Proposition 3]{cnmsw}.

\begin{thm}\label{th_vec2} 
Let $G$ be a connected graph.  Then $\chi_{\rm vect}(G)=2$ if and only if $\chi(G) =2.$
\end{thm}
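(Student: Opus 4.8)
The plan is to prove both implications separately, with the forward direction being the substantive one. The implication $\chi(G)=2 \implies \chi_{\rm vect}(G)=2$ is easy: a $2$-colouring of a connected graph gives (up to swapping the two colour classes) a genuine classical colouring, which by the chain $\chi_{\rm vect}(G) \le \chi_{\rm q}(G) \le \chi(G)$ (Propositions \ref{p_ineq} and \ref{p_vec}, noting $\chi_{\rm vect} \le \chi_{\rm qc}\le\chi_{\rm q}$) forces $\chi_{\rm vect}(G)\le 2$, while $\chi_{\rm vect}(G) > 1$ whenever $G$ has at least one edge (a single vector $x_{v,1}$ with $\|x_{v,1}\|=1$ cannot satisfy $\langle x_{v,1},x_{w,1}\rangle = 0$ on an edge together with $\sum_i x_{v,i} = \sum_i x_{w,i}$). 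So $\chi_{\rm vect}(G)=2$.

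For the forward direction, suppose $\chi_{\rm vect}(G)=2$, so there are unit-sum vectors $\{x_{v,1},x_{v,2}: v\in V\}$ in a Hilbert space $\cl H$ satisfying the four conditions of Definition \ref{d_vec} with $c=2$: nonnegative inner products, $x_{v,1}+x_{v,2} = \eta$ (a fixed unit vector independent of $v$), $\langle x_{v,1},x_{v,2}\rangle = 0$, and $\langle x_{v,1},x_{w,1}\rangle = \langle x_{v,2},x_{w,2}\rangle = 0$ when $(v,w)\in E$. The key step is to show these constraints are so rigid that each pair $\{x_{v,1},x_{v,2}\}$ is essentially a $0/1$ labelling. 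First, since $x_{v,1}\perp x_{v,2}$ and $x_{v,1}+x_{v,2}=\eta$ with $\|\eta\|=1$, we get $\|x_{v,1}\|^2 + \|x_{v,2}\|^2 = 1$ for every $v$. Now fix an edge $(v,w)$. From $x_{w,1}+x_{w,2}=\eta = x_{v,1}+x_{v,2}$ and $\langle x_{v,1},x_{w,1}\rangle=0$, taking the inner product of $x_{v,1}$ against $\eta = x_{w,1}+x_{w,2}$ gives $\|x_{v,1}\|^2 + \langle x_{v,1},x_{v,2}\rangle = \langle x_{v,1},x_{w,1}\rangle + \langle x_{v,1},x_{w,2}\rangle$, i.e. $\|x_{v,1}\|^2 = \langle x_{v,1},x_{w,2}\rangle$. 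Similarly $\|x_{v,2}\|^2 = \langle x_{v,2},x_{w,1}\rangle$. Combined with Cauchy--Schwarz $\langle x_{v,1},x_{w,2}\rangle \le \|x_{v,1}\|\,\|x_{w,2}\|$, one obtains $\|x_{v,1}\| \le \|x_{w,2}\|$, and symmetrically $\|x_{w,2}\| \le \|x_{v,1}\|$ from the edge condition at $w$; hence $\|x_{v,1}\| = \|x_{w,2}\|$ and $\|x_{v,2}\| = \|x_{w,1}\|$ across every edge, and the Cauchy--Schwarz inequalities are equalities, forcing $x_{v,1} = x_{w,2}$ and $x_{v,2} = x_{w,1}$ (after checking the scalar is $1$ using the nonnegativity of inner products, exactly as in the proof of Proposition \ref{p_vec}).

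Thus along any edge the two ``colour vectors'' simply swap. Propagating this around the connected graph $G$, define $t\colon V \to [0,1]$ by $t(v) = \|x_{v,1}\|^2$; then $t$ is constant on each part of any bipartition induced by a walk, and $t(v) + t(w)$-type relations show $t$ takes a single value $a$ on one side and $1-a$ on the other along every edge. If $G$ contained an odd cycle this would force $a = 1-a = 1/2$; but I will instead argue directly that $G$ must be bipartite: the relations $x_{v,1}=x_{w,2}$, $x_{v,2}=x_{w,1}$ along edges mean that the map $v \mapsto x_{v,1}$ is constant on one colour class and equals $v\mapsto x_{v,2}$ on the other, which is consistent around a closed walk only if the walk has even length (the vectors $x_{\cdot,1}$ and $x_{\cdot,2}$ are distinct, since $\langle x_{v,1},x_{v,2}\rangle = 0$ and they are not both zero). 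Hence $G$ is bipartite, so $\chi(G) \le 2$; and $\chi(G) \ge 2$ since $G$ is connected with at least one edge (if $G$ is a single vertex, $\chi_{\rm vect}=1\ne 2$, so that case is excluded). Therefore $\chi(G) = 2$.

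\textbf{Main obstacle.} The delicate point is extracting full rigidity — that $x_{v,1}=x_{w,2}$ exactly, not merely that the norms match — which requires carefully chaining the equality cases in Cauchy--Schwarz with the nonnegativity hypothesis $\langle x_{v,i},x_{w,j}\rangle \ge 0$, much as in Proposition \ref{p_vec}; and then converting this vector-level swap into the combinatorial statement that $G$ has no odd cycle. One must be slightly careful about degenerate cases where some $x_{v,i}=0$ (which happens precisely when $t(v)\in\{0,1\}$), but these are exactly the genuine $2$-colourings and cause no trouble.
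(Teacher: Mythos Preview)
Your proof is correct and follows essentially the same approach as the paper's: both hinge on computing $\|x_{v,1}\|^2 = \langle x_{v,1},\xi\rangle = \langle x_{v,1},x_{w,2}\rangle$ along an edge, applying Cauchy--Schwarz, and using the equality case together with the nonnegativity of inner products to force $x_{v,1}=x_{w,2}$, $x_{v,2}=x_{w,1}$. The only cosmetic difference is that the paper then directly defines a $2$-colouring by fixing a base vertex $v_0$ and assigning to $w$ the unique $i$ with $x_{w,i}=x_{v_0,1}$, whereas you deduce bipartiteness by ruling out odd cycles; these are equivalent endgames.
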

\begin{proof} 
Assume that $\chi_{\rm vect}(G) = 2$ and fix a 
vectorial $2$-colouring $\{x_{v,i} : i = 1,2, v\in V\}$ of $G$. 
%Thus, for each $v$ $\|x_{v,1}\|^2 + \|x_{v,2}\|^2 =1$ with $x_{v_1} \perp x_{v,2}.$

If $(v,w)$ is an edge, then
\[ \|x_{v,1}\|^2 = \langle x_{v,1}, \xi \rangle = \sum_{j=1}^2 \langle x_{v,1}, x_{w,j} \rangle = \langle x_{v,1}, x_{w,2} \rangle \le \|x_{v,1}\| \|x_{w,2}\|.\] 
Hence, $\|x_{v,1}\| \le \|x_{w,2}\|,$ but repeating the argument beginning with $x_{w,2},$ we obtain,  $\|x_{v,1}\|= \|x_{w,2}\|$ and hence, all the above inequalities are equalities.
Thus, if $(v,w)$ is an edge, then 
$x_{v,1} = x_{w,2}$ and, by symmetry, $x_{v,2} = x_{w,1}.$  
Fix a vertex $v_0$ and using that the graph is connected we see that for every $w$ there is a unique $i$ so that $x_{w,i} = x_{v_0,1},$ in which case we will assign $w$ colour $i.$  Any vertex $z$ that is adjacent to $w$ will have
$x_{w,i} = x_{z, i+1},$ where the arithmetic is mod $2$.
We have thus defined a $2$-colouring of $G.$

The converse follows from the fact that for any graph with more than one vertex, 
$2 \le \chi_{\rm vect}(G) \le \chi(G).$  Hence, when $\chi(G)=2$, we have equality. 
\end{proof}

\begin{corollary}\label{chrom=3}
If $\chi(G)=3$ then $\chi_{\rm vect}(G) =3$. 
In particular, if $C_5$ is the $5$-cycle then $\chi_{\rm vect}(C_5) = 3$.
\end{corollary}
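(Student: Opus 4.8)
The plan is to derive Corollary~\ref{chrom=3} from Theorem~\ref{th_vec2} together with the general chain of inequalities already established. First I would recall that for any graph $G$ on at least two vertices we have $2 \le \chi_{\rm vect}(G)$, a fact used at the end of the proof of Theorem~\ref{th_vec2} (it follows from Lemma~\ref{l_sind}/Proposition~\ref{p_qmaxc}-style reasoning, or simply because a single vector cannot simultaneously satisfy the orthogonality constraint on an edge). Second, I would invoke the inequality $\chi_{\rm vect}(G) \le \chi(G)$, which holds because an honest $c$-colouring $r : V \to \{1,\dots,c\}$ yields a vectorial $c$-colouring by choosing a fixed orthonormal basis $\{u_1,\dots,u_c\}$ of $\bb{C}^c$ and setting $x_{v,i} = u_i$ if $i = r(v)$ and $x_{v,i} = 0$ otherwise; one checks the four defining conditions of Definition~\ref{d_vec} immediately. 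Applying these two bounds with $\chi(G) = 3$ gives $2 \le \chi_{\rm vect}(G) \le 3$.

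The only remaining possibility to exclude is $\chi_{\rm vect}(G) = 2$. Here I would use Theorem~\ref{th_vec2}: if $G$ is connected, then $\chi_{\rm vect}(G) = 2$ forces $\chi(G) = 2$, contradicting the hypothesis $\chi(G) = 3$. Hence $\chi_{\rm vect}(G) = 3$. One subtlety to address: Theorem~\ref{th_vec2} is stated for connected graphs, whereas the corollary as phrased does not assume connectedness. I would handle this by noting that if $\chi(G) = 3$ then $G$ has a connected component $G_0$ with $\chi(G_0) = 3$ (the chromatic number of a graph is the maximum of the chromatic numbers of its components, and $\chi(G_0) \le 2$ for every component would give $\chi(G) \le 2$); a vectorial $2$-colouring of $G$ restricts to one of $G_0$, so $\chi_{\rm vect}(G) = 2$ would give $\chi_{\rm vect}(G_0) = 2$, whence $\chi(G_0) = 2$ by Theorem~\ref{th_vec2}, a contradiction. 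Therefore $\chi_{\rm vect}(G) \ge 3$, and combined with $\chi_{\rm vect}(G) \le \chi(G) = 3$ we conclude $\chi_{\rm vect}(G) = 3$.

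Finally, for the specific statement about $C_5$: the $5$-cycle is connected and is well known to be an odd cycle with $\chi(C_5) = 3$, so the general assertion applies directly and gives $\chi_{\rm vect}(C_5) = 3$. No genuine obstacle arises in this argument; the main point requiring a moment of care is the reduction from arbitrary $G$ to a connected component in order to apply Theorem~\ref{th_vec2}, and verifying the elementary bound $\chi_{\rm vect}(G) \le \chi(G)$ via the basis-vector construction. Everything else is a direct citation of the preceding results.
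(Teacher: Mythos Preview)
Your argument is correct and follows essentially the same route as the paper: bound $\chi_{\rm vect}(G)$ between $2$ and $\chi(G)=3$, then invoke Theorem~\ref{th_vec2} to rule out $\chi_{\rm vect}(G)=2$. Your treatment is in fact slightly more careful than the paper's, since you explicitly reduce to a connected component before applying Theorem~\ref{th_vec2} (which is stated only for connected graphs), a point the paper's proof leaves implicit.
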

\begin{proof}
Suppose that $\chi(G)=3$; then $\chi_{\rm vect}(G)\leq 3$. However,
if $\chi_{\rm vect}(G) = 2$ then by Theorem \ref{th_vec2}, $\chi(G) =2$, hence 
$\chi_{\rm vect}(G) \neq 2$. It is easy to see that $\chi_{\rm vect}(G) = 1$
only for the graph on one vertex. Thus, $\chi_{\rm vect}(G) = 3$. 

The claim for the graph $C_5$ follows from the fact that $\chi(C_5) = 3$.
\end{proof}

Schrijver defined a lower bound $\vartheta^-(G)$ for the Lovasz theta function, $\vartheta(G),$ which we use in the theorem below. Our original inequality was weaker in that it used $\vartheta(G).$  We are grateful to the authors of \cite{cmrssw} for pointing out this strengthening.  This result can also be deduced 
from other inequalities found in their paper.

\begin{theorem}\label{p_fullvec}
Let $G$ be a graph on $n$ vertices. 
Then $\chi_{\vecc}(G)\vartheta^-(G) \geq n$. 
In particular, if $G$ is the complete graph on $n$ vertices, then $\chi_{\vecc}(G) = n$.
\end{theorem}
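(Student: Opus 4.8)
The plan is to prove the inequality $\chi_{\vecc}(G)\vartheta^-(G)\geq n$ by extracting, from any vectorial $c$-colouring, a feasible solution to (the appropriate formulation of) the semidefinite program defining Schrijver's $\vartheta^-(G)$, with objective value at least $n/c$. Recall that $\vartheta^-(G)$ admits a primal description as the maximum of $\sum_{v,w} M_{v,w}$ over positive semidefinite matrices $M=(M_{v,w})_{v,w\in V}$ with $\tr(M)=1$, $M_{v,w}=0$ for $(v,w)\in E$, and (the Schrijver strengthening) $M_{v,w}\geq 0$ for all $v,w$; the nonnegativity is exactly what lets us use $\vartheta^-$ rather than $\vartheta$.

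First I would fix a vectorial $c$-colouring $\{x_{v,i}\}$ in a Hilbert space $H$ and set $\xi=\sum_{i=1}^c x_{v,i}$ (independent of $v$, a unit vector). Define vectors $z_v=\sum_{i=1}^c \overline{\langle \xi, x_{v,i}\rangle}^{1/2}\cdot(\text{something})$ — more cleanly, I would work directly with the Gram-type matrix. The natural candidate is $M_{v,w} := \sum_{i=1}^c \langle x_{v,i}, x_{w,i}\rangle$, divided by an appropriate normalising constant. Positivity of $M=(M_{v,w})$ as a matrix follows because $M_{v,w}=\langle X_v, X_w\rangle$ where $X_v = \sum_i x_{v,i}\otimes e_i \in H\otimes\bb{C}^c$, so $M$ is a Gram matrix. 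The constraint $M_{v,w}\geq 0$ entrywise is immediate from $\langle x_{v,i},x_{w,i}\rangle\geq 0$. The edge constraint $M_{v,w}=0$ for $(v,w)\in E$ is exactly the last defining relation of a vectorial colouring. For the trace, $\tr(M)=\sum_v\sum_i \|x_{v,i}\|^2$; this is not automatically $1$, so I would rescale by $t:=\tr(M)$ to get $M'=M/t$ feasible, giving $\vartheta^-(G)\geq \sum_{v,w}M'_{v,w} = \frac{1}{t}\sum_{v,w}\sum_i\langle x_{v,i},x_{w,i}\rangle$.

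The heart of the argument is then the estimate $\sum_{v,w}\sum_i \langle x_{v,i},x_{w,i}\rangle \geq n\cdot t$, i.e. the numerator is at least $n$ times the trace. Writing $u_i := \sum_{v} x_{v,i}$, the numerator equals $\sum_i \|u_i\|^2$. Meanwhile $\sum_i u_i = \sum_v \sum_i x_{v,i} = n\xi$, so by Cauchy--Schwarz $n^2 = \|\sum_i u_i\|^2 \leq c\sum_i\|u_i\|^2$, which unfortunately points the wrong way; so instead I expect to need to bound $t=\sum_{v,i}\|x_{v,i}\|^2$ from above. Using $\langle x_{v,i}, x_{v,j}\rangle = 0$ for $i\neq j$ we get $\langle\xi,\xi\rangle = \sum_{i}\|x_{v,i}\|^2$ is NOT right either — rather $1=\langle\xi,\xi\rangle = \sum_{i,j}\langle x_{v,i},x_{v,j}\rangle = \sum_i\|x_{v,i}\|^2$ using orthogonality, so in fact $\sum_i\|x_{v,i}\|^2 = 1$ for each $v$, whence $t = n$ exactly. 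Then the bound needed becomes $\sum_i\|u_i\|^2\geq n$, and since $\sum_i u_i = n\xi$ with the $u_i$ not orthogonal in general, this requires more care; the clean route is: the matrix $M$ already has $\tr(M)=n$, so after rescaling $M' = M/n$ is feasible with $\sum_{v,w}M'_{v,w} = \frac{1}{n}\sum_i\|u_i\|^2$, and I must show $\sum_i\|u_i\|^2\geq n$. I expect this last inequality to be the main obstacle, and I anticipate it follows by applying the defining properties once more — perhaps by observing $\langle u_i,\xi\rangle = \sum_v\langle x_{v,i},\xi\rangle = \sum_v\sum_j\langle x_{v,i},x_{v,j}\rangle = \sum_v\|x_{v,i}\|^2$, so $\sum_i\langle u_i,\xi\rangle = \sum_{v,i}\|x_{v,i}\|^2 = n$, and then Cauchy--Schwarz gives $n = \sum_i\langle u_i,\xi\rangle \leq (\sum_i\|u_i\|^2)^{1/2}(\sum_i 1)^{1/2}$ which again is the wrong direction — so the correct pairing must instead be against $\xi$ componentwise: $n = \sum_i\langle u_i,\xi\rangle = \langle \sum_i u_i,\xi\rangle = \langle n\xi,\xi\rangle = n$, a tautology, confirming the bound must come from positivity of $M$ restricted appropriately. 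I would therefore finish by invoking that for a PSD matrix with the all-ones test vector, $\mathbf{1}^T M\mathbf{1} = \|\sum_v X_v\|^2 = \|\sum_i u_i\|^2$ where now $X_v\in H\otimes\bb C^c$ and $\sum_v X_v = \sum_i u_i\otimes e_i$, so $\mathbf{1}^TM\mathbf{1} = \sum_i\|u_i\|^2$, and separately $\langle X_v, \eta\rangle$ for the distinguished vector $\eta=\sum_i x_{v,i}\otimes e_i$... I will instead take the cleaner path of testing the dual/primal SDP with the rank-one matrix built from $(\langle x_{v,i},\xi\rangle)_{v}$, which forces the value $n/c$; the final special case $\chi_{\vecc}(K_n)=n$ then follows since $\vartheta^-(K_n)=1$ combined with $\chi_{\vecc}(K_n)\leq\chi(K_n)=n$ and the just-proved lower bound $\chi_{\vecc}(K_n)\geq n/\vartheta^-(K_n)=n$.
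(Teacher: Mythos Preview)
Your approach via the primal SDP for $\vartheta^-(G)$ is sound, but you confused yourself at the crucial step and never closed the argument. The target is $\vartheta^-(G)\geq n/c$, so after normalising by $\tr(M)=n$ you need the objective $\frac{1}{n}\sum_i\|u_i\|^2\geq n/c$, i.e.\ $\sum_i\|u_i\|^2\geq n^2/c$ --- not $\geq n$ as you wrote. And the Cauchy--Schwarz inequality you already recorded, $n^2=\bigl\|\sum_i u_i\bigr\|^2\leq c\sum_i\|u_i\|^2$, is \emph{precisely} this bound in the right direction; it finishes the proof immediately. Everything after ``which unfortunately points the wrong way'' is unnecessary wandering, and the vague ``cleaner path'' at the end is never made concrete. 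Your feasibility checks ($M\succeq 0$ as a Gram matrix of the $X_v=\sum_i x_{v,i}\otimes e_i$, entrywise nonnegativity, vanishing on edges, and $M_{v,v}=\sum_i\|x_{v,i}\|^2=1$ from the orthogonality relations) are all correct.

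Once that slip is fixed, your argument is a legitimate and somewhat different route from the paper's. The paper does not plug into the primal SDP. Instead it packages the same Gram data as a completely positive map $\Phi:M_c\to M_n$ with Choi matrix $(\langle x_{v,i},x_{w,j}\rangle)$, observes that $\Phi(I_c)=I_n+K$ with $K$ entrywise nonnegative and supported on the non-edges of $G$ (so $\|\Phi\|=\|\Phi(I_c)\|\leq\vartheta^-(G)$ via the characterisation in \cite{cmrssw}), and that $\Phi(J_c)=J_n$; then $n=\|J_n\|\leq\|\Phi\|\,\|J_c\|\leq c\,\vartheta^-(G)$. Your matrix $M$ is exactly the paper's $\Phi(I_c)$; the paper replaces your Cauchy--Schwarz step with the norm inequality for a unital CP map, and replaces your primal feasibility argument with a dual-side description of $\vartheta^-$. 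Both proofs are short; yours is more elementary and self-contained once straightened out, while the paper's ties the inequality directly to a standard operator-theoretic formulation of $\vartheta^-$. The deduction for $K_n$ is the same in both.
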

\begin{proof}
Let $(x_{v,j})_{j=1}^c$, $v\in V$, be families of vectors in a Hilbert space $H$ 
satisfying the conditions of Definition \ref{d_vec}.  
For $i,j = 1,\dots,c$, let 
$Q_{i,j} = (\langle x_{v,i},x_{w,j}\rangle)_{v,w\in V}$; thus, $Q_{i,j}\in M_n$.
Set $Q = (Q_{i,j})_{i,j=1}^c\in M_c(M_n) = M_{nc}$. 
Note that $Q$ is the Grammian of the family $\{x_{v,i} : v\in V,1\leq i\leq c\}$
and is hence positive. By Choi's Theorem, 
the linear map $\Phi : M_c\to M_n$ defined by $\Phi(E_{i,j}) = Q_{i,j}$, 
is completely positive.
Note that 
$$\Phi(I) = \sum_{i=1}^c \Phi(E_{i,i}) = \sum_{i=1}^c Q_{i,i}.$$
The $(v,v)$-entry of $\Phi(I)$ is thus 
$$\sum_{i=1}^c \langle x_{v,i},x_{v,i}\rangle = 1.$$
On the other hand, if $(v,w)\in E$ then $\langle x_{v,i},x_{w,i}\rangle = 0$ and thus 
the $(v,w)$-entry of $\Phi(I)$ is zero. It follows that $\Phi(I) = I + K$, where 
$K$ is supported on the set of edges of the complement of $G$.
Moreover, since all the entries of $K$ are non-negative, by \cite[Theorem~26]{cmrssw}, we have that $\|I+K\| \le \vartheta^-(G).$ 
Thus, $\|\Phi\| = \|\Phi(I)\|\leq \theta^-(G)$. 

On the other hand, letting $J_k$ denote the matrix in $M_k$ ($k\in \bb{N}$)
whose each entry is equal to $1$, we have that 
$\Phi(J_c) = \sum_{i,j=1}^c Q_{i,j}$. 
The $(v,w)$-entry of $\Phi(J_c)$ is this equal to 
$$\sum_{i,j=1}^c \langle x_{v,i},x_{w,j}\rangle =
\left\langle \sum_{i=1}^c x_{v,i}, \sum_{j=1}^c x_{w,j}\right\rangle = \|\xi\|^2 = 1.$$
In other words, $\Phi(J_c) = J_n$. 
Thus, 
$$n = \|J_n\| = \|\Phi(J_c)\|\leq \vartheta^-(G)\|J_c\| = c\vartheta^-(G).$$
The inequality now follows after choosing a vectorial $\chi_{\rm vect}(G)$-colouring.

If $G$ is the complete graph on $n$ vertices then its complement is the null graph on $n$ vertices,
and hence $\vartheta(G) = 1$. Thus, the inequality established in the previous paragraph 
reduces to $\chi_{\vecc}(G) \geq n$. Since $\chi_{\rm vect}(G)\leq \chi(G)\leq n$, 
we have that $\chi_{\rm vect}(G) = n$. 
\end{proof}

\begin{remark} For the orthogonality graphs on $M$ vertices of Proposition~\ref{orthograph} we see that $\vartheta^-(G) \ge \frac{M}{N}.$
\end{remark}

%\begin{proposition}\label{p_gho}
%Let $G$ and $H$ be graphs and $\nph : G \to H$ be a homomorphism. If 
%$\chi_{\diamond} \in \{\vecc, \qc, \qmin, \qs, {\rm q}\}$, 
%then $\chi_{\diamond}(G)\leq \chi_{\diamond}(H)$. 
%\end{proposition}
%\begin{proof}
%We give the proof for $\diamond = \qc$; the rest of the cases are similar.
%Let  $(E_{v,i})_{i=1}^c$ and $(F_{w,j})_{j=1}^c$ ($v,w\in V(H)$) 
%be POVM's satisfying the conditions of Definition \ref{d_3} (iv), for the graph $H$. 
%Then $(E_{\nph(v'),i})_{i=1}^c$ and $(F_{\nph(w'),j})_{j=1}^c$ ($v',w'\in V(G)$)
%satisfy the same conditions for the graph $G$. Indeed,  
%if $i\neq j$ then $\langle E_{\nph(v'),i}F_{\nph(v'),j}\xi,\xi\rangle = 0$.
%On the other hand, if 
%$(v',w')\in E(G)$ then $(\nph(v'),\nph(w'))\in E(H)$ and hence 
%$\langle E_{\nph(v'),i}F_{\nph(w'),i}\xi,\xi\rangle = 0$. 
%\end{proof}

For a given graph $G$, Szegedy's $\vartheta^+(G)$ is a quantity that satisfies $\vartheta(G) \le \vartheta^+(G).$ See \cite{cmrssw} for the definition. The following inequality follows by applying the results from 
\cite{cmrssw}.

\begin{corollary}
For any graph $G$, we have 
$$ \vartheta^+(\overline{G})  \leq \chi_{\rm vect}(G)
\le \chi_{\qc}(G) \le \chi_{\qmin}(G) \le \chi_{\qs}(G) \le \chi_{\q}(G)\le \chi(G).$$
\end{corollary}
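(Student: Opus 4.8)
The plan is to reduce the final corollary to results already in hand plus a single lower-bound inequality drawn from \cite{cmrssw}. The entire right-hand chain $\chi_{\rm vect}(G)\le \chi_{\qc}(G)\le \chi_{\qmin}(G)\le \chi_{\qs}(G)\le \chi_{\q}(G)\le \chi(G)$ is immediate: the last five inequalities are exactly Proposition \ref{p_ineq}, and the first, $\chi_{\rm vect}(G)\le \chi_{\qc}(G)$, is precisely Proposition \ref{p_vec}. So the only new content is the single inequality
\[
\vartheta^+(\overline{G})\le \chi_{\rm vect}(G).
\]
First I would recall from \cite{cmrssw} the characterisation $\chi_{\rm vect}(G)=c$ iff $G\stackrel{+}{\to} K_c$, already noted in the remark following Definition \ref{d_vec}, so that a vectorial $\chi_{\rm vect}(G)$-colouring of $G$ is a graph homomorphism of the relevant ``$+$'' type into the complete graph $K_{\chi_{\rm vect}(G)}$. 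Since $\vartheta^+(\overline{K_c})=\vartheta^+(\text{null graph on }c\text{ vertices})=c$, monotonicity of $\vartheta^+$ of the complement under $\stackrel{+}{\to}$ homomorphisms (this is one of the sandwich-type results in \cite{cmrssw}) yields $\vartheta^+(\overline{G})\le \vartheta^+(\overline{K_{\chi_{\rm vect}(G)}})=\chi_{\rm vect}(G)$.

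Alternatively, and more in keeping with the self-contained spirit of Section 4, I would argue directly using the operator $\Phi$ built in the proof of Theorem \ref{p_fullvec}. Given a vectorial $c$-colouring $(x_{v,i})$ with $c=\chi_{\rm vect}(G)$, form the Grammian $Q=(Q_{i,j})_{i,j=1}^c\in M_c(M_n)$ with $Q_{i,j}=(\langle x_{v,i},x_{w,j}\rangle)_{v,w}$ and the completely positive map $\Phi:M_c\to M_n$, $\Phi(E_{i,j})=Q_{i,j}$, exactly as there; one has $\Phi(I)=I+K$ with $K\ge 0$ supported off $G$ and $\Phi(J_c)=J_n$. The key step is then to feed this $\Phi$ into the dual/primal formulation of Szegedy's $\vartheta^+$: the matrix $I+K$ is a feasible point for the program computing $\vartheta^+(\overline{G})$ up to the normalisation coming from $\Phi(J_c)=J_n$, and the resulting value is bounded by $c$. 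Concretely, one invokes \cite[Theorem 26]{cmrssw} (already cited in the proof of Theorem \ref{p_fullvec}) together with the variational description of $\vartheta^+$ to conclude $\vartheta^+(\overline{G})\le c$.

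The main obstacle is purely expository: making precise which formulation of $\vartheta^+$ from \cite{cmrssw} is being used and verifying that the completely positive map $\Phi$ (or equivalently the Grammian $Q$) certifies the desired bound, i.e. checking feasibility and that the objective evaluates to at most $\chi_{\rm vect}(G)$. This is the same style of argument as in Theorem \ref{p_fullvec}, where $\|\Phi(I)\|\le\vartheta^-(G)$ was extracted from \cite[Theorem 26]{cmrssw}; here one needs the $\vartheta^+$ analogue, which is available in \cite{cmrssw}. Since the corollary explicitly says ``The following inequality follows by applying the results from \cite{cmrssw},'' I expect the intended proof to be a one- or two-line citation combining Proposition \ref{p_ineq}, Proposition \ref{p_vec}, and the $\stackrel{+}{\to}$-monotonicity of $\vartheta^+(\overline{\,\cdot\,})$, rather than a reworking of the $\Phi$-argument; I would write it that way, with the direct argument sketched above relegated to a remark if a self-contained proof is wanted.
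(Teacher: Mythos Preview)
Your proposal is correct and matches the paper's proof essentially exactly: the paper reduces the right-hand chain to Propositions \ref{p_ineq} and \ref{p_vec} (implicitly) and proves the single new inequality $\vartheta^+(\overline{G})\le\chi_{\rm vect}(G)$ by noting that $c=\chi_{\rm vect}(G)$ gives $G\stackrel{+}{\to}K_c$, then invoking \cite[Theorem~10]{cmrssw} to get $\vartheta^+(\overline{G})\le\vartheta^+(\overline{K_c})=c$. Your anticipated ``one- or two-line citation'' version is precisely what the paper does; the alternative $\Phi$-based route you sketch is not pursued there.
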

\begin{proof}
%Let $k = \omega(G)$; then $G$ has an induced full subgraph $H$ of size $k$.
%The embedding $H\to G$ is a homomorphism; thus, by Theorem \ref{p_fullvec} and 
%Proposition \ref{p_gho}, 
%$$k = \chi_{\rm vect}(H)\leq \chi_{\rm vect}(G).$$
Let $c = \chi_{\rm vect}(G),$ then we have that $G \stackrel{+}{\to} K_c.$ By \cite[Theorem~10]{cmrssw}, we have that $\vartheta^+(\overline{G}) \le \vartheta^+(\overline{K_c}) = c.$
\end{proof}

\section{Conclusions and Questions}

If Tsirelson's conjectures about the outcomes of finite dimensional spatial and commuting models for quantum correlations agree, {\it i.e.}, if the non-relativistic and relativistic models give the same outcomes, then 
for every graph one would have $\chi_{\qc}(G)=\chi_{\qmin}(G)= \chi_{\qs}(G) =\chi_{\rm q}(G).$
Thus, determining equality of any pair of these quantities would be interesting.
On the other hand, if the fundamental construction that Tsirelson was attempting to carry out, {\it i.e.}, constructing commuting matrices on a finite dimensional space that realized certain families of vectors, then it would follow that $\chi_{\rm vect}(G) = \chi_{\rm q}(G).$

Thus, proving equality or inequality for any pair of these quantities would be interesting and likely important.

Some other natural questions/problems are:

\begin{question}\label{q_kc}
Does the equality $\chi_{\qc}(G) = \chi_{\qmin}(G)$ for every graph $G$ imply the Kirchberg Conjecture, {\it i.e.}, an affirmative answer to the Connes Embedding Problem?
\end{question}

\begin{question}
Characterise the dual cone in $(\cl S(n,c)\otimes_{\comm}\cl S(n,c))^d$ as a subset of $\cl M(n,c)$.
\end{question}


\begin{thebibliography}{10}

\bibitem{AHKS06} D. Avis, J. Hagesawa, Y. Kikuchi, and Y. Sasaki,
\newblock A quanutm protocol to win the graph coloring game on all hadamard graphs,
\newblock {\it IEICE Trans. Fundam. electron. Commun. Comput. Sci.,} E89-A(5):1378-1381, 2006.
\newblock arxiv:quant-ph/0509047v4, doi:10.1093/ietfec/e89-a.5.1378.


\bibitem{boca} F. Boca,
\newblock ￼Free products of completely positive maps and spectral sets,
\newblock\textit{J. Funct. Anal.} 97 (1991), 251-263.



\bibitem{cnmsw} P.J. Cameron, M.W. Newman, A. Montanaro, S. Severini and A. Winter, 
\newblock On the quantum chromatic number of a graph,
\newblock\textit{The electronic journal of combinatorics} 14 (2007), R81,
\newblock arxiv:quant-ph/0608016.



\bibitem{ce} M. D. Choi and E. Effros,
\newblock Injectivity and operator spaces, 
\newblock\textit{J. Funct. Anal.} 24 (1977), 156-209.

\bibitem{cmrssw} T. Cubitt, L. Mancinska, D. Roberson, S. Severini, D. Stahlke, and A. Winter,
\newblock Bounds on entanglement assisted source-channel coding via the Lovasz $\vartheta$ number and its variants,
\newblock {\it preprint, arxiv:1310.7120v1}, 26 October 2013.


\bibitem{dsw}
R. Duan, S. Severini and A. Winter, 
Zero-error communication via quantum channels, non-commutative graphs and a quantum Lov\'{a}sz $\theta$ function,
{\it IEEE Trans. Inf. Theory PP:99 (2012), arXiv:1002.2514v2}.



\bibitem{fkpt_dg}
D. Farenick, A.~S. Kavruk, V.~I. Paulsen and I.~G. Todorov,
\newblock Operator systems from discrete groups,
\newblock\textit{preprint,  arXiv:1209.1152}, 2012.


\bibitem{fkpt}
D. Farenick, A.~S. Kavruk, V.~I. Paulsen and I.~G. Todorov,
\newblock Characterisations of the weak expectation property,
\newblock\textit{preprint, arXiv:1307.1055}, 2013.

\bibitem{fp} D. Farenick and V.I. Paulsen,
\newblock Operator system quotients of matrix algebras and their tensor
  products,
\newblock {\em Math. Scand.} 111 (2012), 210-243.


\bibitem{fritz} T. Fritz,
\newblock Operator system structures on the unital direct sum of C*-algebras,
\newblock\textit{preprint, arXiv:1011.1247}, 2010.

\bibitem{gr}
C. Godsil and G. Royle, 
\newblock Algebraic graph theory,
\newblock\textit{Springer-Verlag, New York,} 2001.


\bibitem{jnppsw}
M. Junge, M. Navascues, C. Palazuelos, D. Perez-Garcia, V. B. Scholtz and R. F. Werner,
\newblock Connes' embedding problem and Tsirelson's problem,
\newblock\textit{J. Math. Physics} 52, 012102 (2011).


\bibitem{krII} R. V. Kadison and J. R. Ringrose, 
Fundamentals of the theory of operator algebras II,
{\it American Mathematical Society}, Providence, 1997.


\bibitem{kavruk2011}
A.~S. Kavruk,
\newblock Nuclearity related properties in operator systems,
\newblock {\em preprint (arXiv:1107.2133)}, 2011.


\bibitem{kavruk--paulsen--todorov--tomforde2011}
A.~S. Kavruk, V.~I. Paulsen, I.~G. Todorov, and M.~Tomforde,
\newblock Tensor products of operator systems,
\newblock {\em J. Funct. Anal.} 261 (2011), 267-299.


\bibitem{kptt2010}
A.~S. Kavruk, V.~I. Paulsen, I.~G. Todorov, and M.~Tomforde,
\newblock Quotients, exactness, and nuclearity in the operator system category,
\newblock\textit{Adv. Math.} 235 (2013), 321-360.

\bibitem{lo} L. Lovasz,  On the Shannon Capacity of a Graph,
{\it IEEE Transactions on Information Theory,} Vol. II-25, no 1,
January 1979, 1-7.

\bibitem{vpbook} V.~I. Paulsen, Completely Bounded Maps and Operator
  Algebras, {\it Cambridge University Press}, 2002.

%\bibitem{ptt}
%V.~I. Paulsen, I.~G. Todorov, and M.~Tomforde,
%\newblock Operator system structures on ordered spaces
%\newblock\textit{Proc. London Math. Soc.}

\bibitem{SS12} G. Scarpa and S. Severini,
\newblock Kochen-Specker sets and the rank-1 quantum chromatic number,
\newblock {\it IEEE Trans. Inf. Theory,} 58 (2012), no. 4, 2524-2529.

\bibitem{sp} D. Spielman, {\it Spectral Graph Theory,} online lecture
  notes, http://www.cs.yale.edu/homes/spielman/561/.


\bibitem{tsirelson1980}
B.~S. Tsirelson,
\newblock Quantum generalizations of {B}ell's inequality,
\newblock {\em Lett. Math. Phys.}, 4 (1980), no. 4, 93-100.

\bibitem{tsirelson1993}
B.~S. Tsirelson,
\newblock Some results and problems on quantum {B}ell-type inequalities,
\newblock {\em Hadronic J. Suppl.}, 8 (1993), no. 4, 329-345.



\end{thebibliography}
\end{document}